\numberwithin{equation}{section}
\newtheorem{Theorem}{Theorem}[section]
\newtheorem*{Theorem*}{Theorem}
\newtheorem{Corollary}[Theorem]{Corollary}
\newtheorem{Lemma}[Theorem]{Lemma}
\newtheorem{Proposition}[Theorem]{Proposition}
\newtheorem{Construction}[Theorem]{Construction}
\theoremstyle{definition}
\newtheorem{Definition}[Theorem]{Definition}
\newtheorem{Example}[Theorem]{Example}
\newtheorem{Remark}[Theorem]{Remark} }
\def\CC{\mathbb{C}}
\def\DD{\mathbb{D}}
\def\LL{\mathbb{L}}
\def\PP{\mathbb{P}}
\def\RR{\mathbb{R}}
\def\ZZ{\mathbb{Z}}
\def\cW{\mathcal{W}}
\def\calP{\mathcal{P}}
\def\calL{\mathcal{L}}
\def\cC{\mathcal{C}}
\def\cO{\mathcal{O}}
\def\cF{\mathcal{F}}
\def\cE{\mathcal{E}}
\newcommand\frX{\mathfrak{X}}
\newcommand\frs{\mathfrak{s}}
\newcommand{\Coh}{\textup{Coh}}
\newcommand{\Ind}{\textup{Ind}}
\newcommand\Loc{\textup{Loc}}
\newcommand{\QCoh}{\textup{QCoh}}
\newcommand\Spec{\textup{Spec}}
\newcommand\Hom{\textup{Hom}}
\newcommand\SL{\textup{SL}}
\newcommand\nc{\newcommand}
\nc\oX{\overline{X}}
\nc\oD{\overline{D}}
\nc\oT{\overline{T}}
\nc\bsig{\overline{\sigma}}
\nc\Tors{\on{Tors}}
\nc\on{\operatorname}
\nc\ol{\overline}
\nc\ul{\underline}
\nc\Bl{\on{Bl}}
\nc\Conv{\on{Conv}}
\nc\Int{\on{Int}}
\nc\Pc{\mathring{P}}
\nc\Cone{\on{Cone}}
\nc\colim{\varinjlim}
\nc\Sh{\mathrm{Sh}}
\nc\wsh{\Sh^w}
\nc\wmsh{\mu\Sh^w}
\nc\mmod{\on{-mod}}
\nc\ovi{\overline{i}}
\begin{document}
\allowdisplaybreaks

\newcommand{\arXivNumber}{2103.12232}

\renewcommand{\PaperNumber}{055}

\FirstPageHeading

\ShortArticleName{Mirror Symmetry for Truncated Cluster Varieties}

\ArticleName{Mirror Symmetry for Truncated Cluster Varieties}

\Author{Benjamin GAMMAGE~$^{\rm a}$ and Ian LE~$^{\rm b}$}

\AuthorNameForHeading{B.~Gammage and I.~Le}

\Address{$^{\rm a)}$~Department of Mathematics, Harvard University, USA}
\EmailD{\href{mailto:gammage@math.harvard.edu}{gammage@math.harvard.edu}}

\Address{$^{\rm b)}$~Mathematical Sciences Institute, Australian National University, Australia}
\EmailD{\href{mailto:ian.le@anu.edu.au}{ian.le@anu.edu.au}}

\ArticleDates{Received August 25, 2021, in final form July 15, 2022; Published online July 19, 2022}

\Abstract{In the algebraic setting, cluster varieties were reformulated by Gross--Hacking--Keel as log Calabi--Yau varieties admitting a toric model. Building on work of Shende--Treumann--Williams--Zaslow in dimension 2, we describe the mirror to the GHK construction in arbitrary dimension: given a truncated cluster variety, we construct a symplectic manifold and prove homological mirror symmetry for the resulting pair. We relate our construction to various aspects of cluster theory which are known to symplectic geometers.}

\Keywords{homological mirror symmetry; cluster varieties; almost toric fibrations}

\Classification{53D37; 13F60}

\vspace{1mm}

\textit{Note on revised version: an earlier version of this paper contained an error in Section~{\rm 5}, which invalidated the proofs of some results in Sections~{\rm 5.2--5.3}; we have now removed that section of the paper, and we now discuss our expectations on the relation to toric geometry at the end of Section~{\rm \ref{subsec:future}}.}

\section{Introduction}
Cluster varieties are algebraic varieties which are glued together from torus charts using birational maps that are called \emph{cluster transformations}. They are described by starting with a~seed,~$\frs$, which we will define below. Each seed has an associated seed torus, which is one of the torus charts for the cluster variety. It also contains some combinatorial information which explains how to \emph{mutate} seeds. By mutating, one contains a (often infinite) set of seed tori. Mutation also gives a prescription for gluing these seed tori together. The result of gluing these tori together is called a \emph{cluster variety}. Often, one prefers to consider the affinization of this space, in which case it is this affinization which is referred to as the cluster variety.

Cluster varieties come in two flavors, $\mathcal{X}$ and $\mathcal{A}$, which have the same seed data, but use somewhat different birational transformations to glue seed tori. For each seed $\frs$, there is a~natu\-ral dual seed~$\frs^\vee$. Fock and Goncharov conjectured that the cluster varieties $\mathcal{X}(\frs)$ and~$\mathcal{A}\big(\frs^\vee\big)$ satisfied an intricate set of conjectures that they called the \emph{duality conjectures}. Moreover, they suspected that these duality conjectures were a consequence of homological mirror symmetry between $\mathcal{X}(\frs)$ and $\mathcal{A}\big(\frs^\vee\big)$.

In \cite{GHK2, GHK1}, Gross, Hacking, and Keel successfully placed the duality conjectures within the framework of mirror symmetry for log Calabi--Yau varieties. To start with, they reinterpreted the cluster mutation formula as the birational map given by an elementary transformation of $\PP^1$-bundles. This allowed them to give an algebro-geometric construction of cluster varieties: begin with a toric variety $\oX$ whose boundary is purely of codimension 1 (hence a disjoint union of tori); blow up a disjoint union of subtori (determined by the seed data) which are codimension~1 in the boundary tori; delete the strict transform of the original toric boundary.

Above dimension 2, the na\"ive result of performing this blow-up-and-delete operation described in \cite{GHK2} may produce a variety which is ``almost'' a cluster variety, in the sense that the former may differ in codimension 2 from the traditional Fock--Goncharov definition of cluster variety. (In particular, they have the same affinization.) In order to distinguish them, we refer to the former as a {\it truncated cluster variety}. These are the varieties whose mirrors we produce in this paper.

We describe here a mirror to the GHK construction: beginning with the Liouville sector (symplectic Landau--Ginzburg model) mirror to $\oX$, perform Weinstein handle attachments as indicated by the seed data. Using recent advances \cite{GPS2,GPS3,GPS1,NS20} in the theory of Weinstein Fukaya categories, we are able to compute the Fukaya categories of the resulting spaces and match them with categories of coherent sheaves on the truncated cluster varieties. We can thus express many features of cluster algebra quite naturally in the language of symplectic geometry. Many of these features have already been observed in the 2-dimensional case in \cite{STW,STWZ}, subsequently made explicit in \cite{HK20}; we offer a new perspective on the mirror-symmetry result proved there, as well as a generalization to arbitrary dimensions.

The basic Weinstein handle attachment we perform is a familiar part of two-dimensional symplectic geometry, in the form
of Symington's ``nodal trade'' \cite{Sym}. The local model for this move, its relation to cluster theory, and its higher-dimensional analogues have been previously studied \cite{Nad-mut,PT,Vianna}. However, our work is the first to describe the mirror to a general truncated cluster variety in terms of a series of these nodal trade operations.

\subsection{Mirror construction and plan of the paper}
Our construction is most indebted to \cite{STW}, which studied cluster structures inside the Fukaya category of a Weinstein 4-manifold $W$ obtained from a cotangent bundle $T^*\Sigma$ by Weinstein handle attachments. In the case when $\Sigma = T$ is a 2-torus, we propose to reintepret their construction in the framework of homological mirror symmetry, treating $T$ as the fiber of an~SYZ fibration: the wrapped Fukaya category $\cW(W)$ can be understood as the category of coherent sheaves $\Coh\big(W^\vee\big)$ on a mirror cluster variety.

The best framework for understanding this mirror cluster variety turns out to be the Gross--Hacking--Keel construction, which we review in Section~\ref{sec2}. Their construction, beginning with a~toric variety, performs a certain blow-up-and-delete operation in the toric divisors.
Our obser\-va\-tion is that if~$W^\vee$ is mirror to a Weinstein manifold obtained from the handle attachments described in \cite{STW}, the blow-up-and-delete operations of~$W^\vee$ are in bijection with the handle attachments of~$W$.

We prove that in arbitrary dimensions, the basic blow-up-and-delete operation of \cite{GHK2} is mirror to a basic Weinstein disk attachment.
A seed $\frs$, which in \cite{GHK2} is treated as a recipe for producing a cluster variety $U$, we treat as a recipe for producing a Weinstein manifold $U^\vee$.
After some review of microlocal sheaf theory in Section~\ref{sec3}, in Section~\ref{sec4} we describe this Weinstein manifold and prove our main theorem, a homological mirror symmetry result for this pair:
\begin{theorem*}
 There is an equivalence of dg categories $\Coh(U)\cong \cW\big(U^\vee\big)$ between the category of coherent sheaves on the truncated cluster variety $U$ and the wrapped Fukaya category of the Weinstein manifold $U^\vee$.
\end{theorem*}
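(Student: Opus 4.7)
The plan is to prove the equivalence $\Coh(U) \cong \cW(U^\vee)$ by induction on the sequence of blow-up-and-delete operations prescribed by the seed $\frs$ in the GHK construction of $U$. The base case is toric homological mirror symmetry: $\Coh(\oX) \cong \cW(\oX^\vee)$, where $\oX^\vee$ is the Liouville sector mirror to the toric variety $\oX$. This case is standard and follows from the microlocal sheaf-theoretic computations to be reviewed in Section 3, together with the identification of the Liouville skeleton of $\oX^\vee$ as the conormal structure determined by the fan of $\oX$.

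For the inductive step, the essential ingredient is the local statement -- the ``basic'' mirror symmetry result promised in the introduction -- that one blow-up-and-delete operation along a codimension-one subtorus $Z$ of a toric boundary divisor $D \subset \oX$ is mirror to attaching a single Weinstein handle along a specific Legendrian in the contact boundary of $\oX^\vee$. Once this is in hand, I iterate as follows: on the A-side, the Weinstein handle attachment modifies the wrapped Fukaya category via the pushout/gluing formulas of \cite{GPS1,GPS2,GPS3,NS20}, which express $\cW$ of a Weinstein manifold obtained by handle attachment as a colimit of the wrapped categories of its constituent pieces. On the B-side, the blow-up-and-delete operation -- interpreted by GHK as an elementary transformation gluing two toric charts along a cluster mutation -- correspondingly presents $\Coh$ of the new variety as a pushout of categories of coherent sheaves on toric charts. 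The inductive equivalence then follows by matching these two pushout presentations, using the base-case toric HMS applied to each chart, and iterating produces the desired global equivalence $\Coh(U) \cong \cW(U^\vee)$.

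The main obstacle is the local matching: identifying the Legendrian attaching sphere of the Weinstein handle with the object mirror to the subtorus $Z$ being blown up, and verifying that the restriction functors between wrapped Fukaya categories are intertwined with the pullback functors between categories of coherent sheaves. A secondary subtlety arises from the ``truncated'' aspect of $U$: since $U$ can differ from the classical Fock-Goncharov cluster variety in codimension $\geq 2$, one must check that the category produced inductively is genuinely $\Coh(U)$ rather than $\Coh$ of the affinization or of some intermediate open. Fortunately, both the blow-up-and-delete operation on the B-side and the handle-attachment operation on the A-side are well-defined at each stage of the induction, so the truncated framework is preserved throughout and no further adjustment is needed.
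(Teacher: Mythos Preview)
Your overall strategy --- use GPS sectorial descent on the A-side and match it with a B-side colimit --- is the right one, but your description of the B-side colimit is incorrect, and this is where the real work lies. The A-side pushout coming from a handle attachment has three pieces: the Liouville sector $\oX_\Sigma^\vee$ (the cotangent bundle of the torus with stops at the attaching Legendrians), the handle $T^*(D_{\chi_i}\times T_i)$, and their overlap $T^*(S^1_{\chi_i}\times T_i)$. Under toric mirror symmetry these translate respectively to $\Coh(\oX)$, $\Coh(H)$, and $\Coh(\oD)$, with the two maps being the pushforward $i_*:\Coh(\oD)\to\Coh(\oX)$ and the pullback $j^*:\Coh(\oD)\to\Coh(H)$. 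This is \emph{not} a presentation as a pushout of ``$\Coh$ on toric charts glued by a cluster mutation''; the torus charts of $U$ correspond on the A-side to Liouville \emph{subdomains} related by Viterbo restriction, which points the wrong way for a GPS colimit. So the inductive step you describe does not match the handle-attachment decomposition.

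What is actually needed, and what you are missing, is an algebro-geometric theorem identifying the somewhat strange pushout
\[
\colim\bigl(\Coh(H)\xleftarrow{\,j^*\,}\Coh(\oD)\xrightarrow{\,i_*\,}\Coh(\oX)\bigr)
\]
(with its opposing functorialities) with $\Coh(\Bl_H\oX\setminus\widetilde{\oD})$. This is the paper's Theorem~\ref{thm:main-general}, and its proof relies on Orlov's semi-orthogonal decomposition $\Coh(\Bl_H\oX)=\langle \Coh(H)\otimes\cO_E(-1),\Coh(\oX)\rangle$ for blowups: deleting the proper transform $\widetilde{\oD}$ localizes this SOD in exactly the way that produces the pushout above. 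Without this input there is no reason to expect the A-side colimit to compute $\Coh(U)$, and your proposal does not indicate any substitute for it.
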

We also mention that cluster mutations can be understood in our framework, referring to \cite{Nad-mut,PT,STW} for details.

\subsection[{Relation to [18]}]{Relation to \cite{HK20}}

In \cite{HK20}, Paul Hacking and Ailsa Keating produced an equivalence similar to the one we describe here, in the special case where the spaces involved are 2-dimensional. As we do, they follow~\cite{GHK2} in beginning with a log Calabi--Yau pair $(Y,D)$ obtained from certain non-toric blowups of a~toric pair, which they require to be 2-dimensional. (Our $U$ is the complement $Y \setminus D$.) They associate to this pair both a space $M$ which is mirror to $Y\setminus D$ together with a Lefschetz fibration $f\colon M\to \CC$ which is mirror to the compactification $Y\supset Y\setminus D$, and they prove a homological mirror symmetry equivalence in each case, which we may summarize as follows:
\begin{theorem*}[\cite{HK20}]
There is an equivalence of dg categories $\Coh(Y\setminus D)\cong \cW(M)$ between the category of coherent sheaves on the cluster variety $Y\setminus D$ and the wrapped Fukaya category of~$M$. In addition, there is an equivalence $\Coh(Y)\cong \cW(M,f)$ between coherent sheaves on the compactification and the Fukaya--Seidel category of the mirror Lefschetz fibration.
\end{theorem*}

In this 2-dimensional setting, it would be useful to compare the mirror $M$ described in \cite{HK20} to the space $U^\vee$ described in this paper. We believe it should be straightforward to check that the Lefschetz fibration on $M$ describes the same sequence of Weinstein handle attachments by which we have described $U^\vee$. Their work then also indicates the changes which would need to be made to our constructions in order to construct a mirror to the compact variety~$Y$ (rather than just the open part $Y\setminus D$). A detailed comparison of these constructions and the relevant homological mirror symmetry computations are interesting questions for future work.\looseness=1

Conversely, our work also suggests a generalization of the results of \cite{HK20}, since in this paper we allow ourselves to work in arbitrary dimensions rather than just dimension 2. Indeed, the main idea of this paper is that so long as we restrict ourselves on the B-side to the setting of truncated cluster varieties, there are no new phenomena besides those which have already appeared in dimension 2.
It would be very interesting to know whether this insight can be applied to produce mirror Lefschetz fibrations in higher dimensions as well.

\subsection{Future questions}\label{subsec:future}
As mentioned above, we restrict ourselves in this paper to varieties obtained from the construction of \cite{GHK2}, which may
differ in codimension 2 from the cluster varieties which have traditionally arisen in representation theory and geometry.
The mirrors to these latter cluster varieties will be Weinstein manifolds with skeleta more complicated than those described here.
In future work, we will describe these more complicated Lagrangian skeleta in the special cases of positroid cells and Richardson varieties, and explain their relation to mirror symmetry.

Those more complicated skeleta are also desirable because they admit deformations to more singular skeleta which are actually {\em holomorphic Lagrangian} in a certain hyperk\"ahler structure. (In the simplest example of a skeleton which is a torus with disk glued in, this deformation collapses the disk to produce a nodal curve.) Such skeleta have close connections with representation theory and holomorphic symplectic geometry; in the toric hyperk\"ahler setting, these skeleta have been used to prove mirror-symmetry results in \cite{GMW19,MW18}.

However, already in the cases we consider, it seems likely that the Lagrangian skeleta we describe can be used to produce interesting results in symplectic geometry. For instance, the skeleton of one of our Weinstein manifolds $U^\vee$ encodes the Reeb dynamics on the contact boundary of $U^\vee$. In~\cite{Pasc-SH}, an analysis of Reeb dynamics for $U^\vee$ 2-dimensional was used in order to study its quantum cohomology ring and hence the coordinate ring of the mirror cluster variety; we hope that our symplectic construction will help uncover new features of the algebraic geometry of mirror cluster varieties.

Finally, it would be interesting to identify the Weinstein manifolds described in this paper with K\"ahler manifolds naturally arising in toric geometry. Given a toric surface $\overline{X}_\Delta$ with moment polytope $\Delta,$ the \textit{nodal trade} construction of~\cite{Sym} can be used to construct an integrable system with singularities
\[
    \overline{X}_\Delta\to \Delta,
\]
where now the preimage of $\partial \Delta$ is a certain smoothing of the toric boundary. In the case of a~smoothing of a single node in $\partial \overline{X}_\Delta$, the complement $\overline{X}_\Delta\setminus \partial \overline{X}_\Delta$ can be shown to have skeleton given by a torus with a single disk attached. This case, and its relation to cluster theory, is studied in~\cite{PT}.
We expect that the Weinstein manifolds we describe, together with singular integrable systems on them, can be obtained by globalizing this nodal trade construction.

\subsection*{Notation}
The following notation will be used throughout the paper.
\begin{itemize}\itemsep=0pt
 \item We pick dual lattices $N$, $M$, and we write $N_\RR$, $N_\CC$ for $N\otimes_\ZZ\RR$, $N\otimes_\ZZ\CC$, and similarly for~$M$.
 \item We write $N_{S^1}$, $N_{\CC^\times}$ for the real and complex tori $N_\RR/N$, $N_\CC/N$, and similarly for $M$. Note that $N_{\CC^\times} = N_\RR/N\times N_\RR$. In sections of this paper where we are only concerned with one of $M$ or $N$ and no ambiguity can result, we will sometimes denote this real or complex torus by $T$ or $T_\CC$, respectively.
 \item We write $\Sigma$ for a stacky fan of rays in $N_\RR$. ``Stacky'' means that we keep track not only of the rays $\langle \psi_1\rangle, \dots, \langle \psi_r\rangle$, but of not necessarily primitive generators $\{d_i\psi_i\}_{i=1}^r$, where $\psi_i$ is primitive and $d_i\in \ZZ_{>0}$.
 \item We denote the toric stack with fan $\Sigma$ by $\oX_\Sigma$. It has toric divisor $\oD$ and dense open torus $\oT_{\CC} := N_{\CC^\times}$, which has cocharacter lattice $N$ and character lattice~$M$.
\end{itemize}

\subsection*{Categorical conventions}
Throughout this paper, we work with stable $\infty$-categories over a field $k$ of characteristic 0, which we model as dg categories (so that, for instance, $\Coh(X)$ always refers to the dg category of coherent sheaves on $X$). All functors are derived, and the word ``limits/colimit'' in a diagram of categories is always used to refer to a homotopy limit/colimit.

\section{Cluster varieties from non-toric blowups}\label{sec2}
In this section, we review the Gross--Hacking--Keel construction of cluster varieties, following~\cite{HK-icm} and then, with an eye toward our future symplectic constructions, we discuss their SYZ geometry.
\subsection{Review of the GHK construction}
The constructions of the Gross--Hacking--Keel program take place within the framework of log Calabi--Yau geometry.

\begin{Definition}
Let $(X,D)$ be a smooth projective variety equipped with a reduced normal crossings divisor $D$ satisfying $K_X+D=0$. In this case we say that $(X,D)$ is a {\em log Calabi--Yau pair} and $U=X\setminus D$ is a {\em log Calabi--Yau variety}. We write $\Omega$ for the unique (up to scalar multiples) holomorphic volume form on $U$ with simple poles on each component of $D$.
\end{Definition}

\begin{Remark}\label{rem:quasiproj-pair}
 In this paper, we will actually be interested only in the pair $(X',D')$ obtained from $(X,D)$ as above by deleting all strata in $D$ of codimension at least 2 in $X$. The resulting pair does not strictly fall under the above definition, since $X'$ is now only quasiprojective, but we will abuse notation slightly by continuing to refer to $(X,D)$ as a log Calabi--Yau pair.
\end{Remark}

\begin{Example}
 Let $\big(\oX,\oD\big)$ be an $n$-dimensional toric variety with its toric boundary divisor. Then $\big(\oX,\oD\big)$ is a log Calabi--Yau pair with $U=X\setminus D\cong (\CC^\times)^n$, $\Omega=\frac{{\rm d}x_1}{x_1}\wedge\cdots\wedge \frac{{\rm d}x_n}{x_n}$.
\end{Example}

As suggested in Remark~\ref{rem:quasiproj-pair}, we will only be interested here in considering log Calabi--Yau pairs where $D$ has no higher-codimension strata. Such a pair in the toric case can be obtained from a fan with no higher-dimensional cones.

\begin{Example}\label{ex:quasiproj-pair}
 Let $\Sigma\subset N_\RR$ be a fan of cones of dimension at most 1. In other words, $\Sigma$ contains the zero cone and the rays generated by primitive vectors $\psi_1,\dots,\psi_r\in N$. Then $\big(\oX_\Sigma,\oD\big)$ is a~log Calabi--Yau pair where $\oD = \bigsqcup_{i=1}^r \oD_i$ is a disjoint union of copies of $(\CC^\times)^{n-1}$. More precisely, the component $\oD_i$ corresponding to ray $\langle \psi_i\rangle$ is isomorphic to the quotient torus $(N/\psi_i)_{\CC^\times}$. Note that the character lattice of the torus $\oD_i$ is therefore the orthogonal $\psi_i^\perp\subset M$.
\end{Example}

\begin{Remark}\label{rem:stackiness}
 In fact, we will see that in order to produce cluster varieties whose seed matrix is not skew-symmetric but only skew-symmetrizable, we will have to allow $\Sigma$ to be a toric stack. For information on toric stacks, we refer to \cite{BCS}. In the setting of Example~\ref{ex:quasiproj-pair}, this means only that we equip $\Sigma$ with a choice of not necessarily primitive generator of each ray, or equivalently that to each primitive generator $\psi_i$, we associate a positive integer~$d_i$. The resulting toric stack~$\oX_\Sigma$ has coarse moduli space the usual toric variety associated to~$\Sigma$, but $\oX_\Sigma$ has a $\ZZ/d_i$ isotropy group along the divisor~$\oD_i$.
\end{Remark}

The discovery of \cite{GHK2,GHK1} is that log Calabi--Yau varieties related to the previous example in a simple way possess cluster structure.
\begin{Construction}\label{constr:ghk-cluster}
 Let $\big(\oX_\Sigma,\oD\big)$ be an $n$-dimensional toric pair as in Example~$\ref{ex:quasiproj-pair}$, and choose a~character $\chi_i\in \psi_i^\perp$ for each component $\oD_i$ of $\oD$.
 Let $H\subset \oD$ be the subvariety whose intersection with the component $\oD_i$ is equal to the linear subspace $\{\chi_i = -1\}$.
 We write $(X,D)$ for the blowup of $\oX$ along $H$ together with the strict transform of $\oD$.
\end{Construction}
\begin{Remark}
 The construction of \cite{GHK2} more generally allows components of $H$ to be defined by an equation of the form $\chi_i = \lambda$ for any $\lambda\in \CC^\times$. We prefer to restrict to the case of $\lambda = -1$, since this is a paper about exact symplectic geometry, and a mirror to the variety $X$ obtained for more general $\lambda$ is expected to be non-exact. (See \cite[Section~2.2]{HK20} for a more detailed discussion of this phenomenon in dimension 2.) However, note that in this paper we do not discuss mirror symmetry for the variety $X$ but only for $U$, which is not generally sensitive to the value of $\lambda$, so this issue will not arise explicitly in this paper.
\end{Remark}

Observe that if the divisor $\oD$ has a single component, then we can choose coordinates so that $\oX = (\CC\times \CC^\times)\times(\CC^\times)^{n-2}$ and $H=\{(0,-1)\}\times (\CC^\times)^{n-2}$, so that the space $X$ resulting from the GHK construction is
\[X = \Bl_{\{(0,1)\}}(\CC\times\CC^\times)\times (\CC^\times)^{n-2}.\]
A recurring theme in this paper is that Construction \ref{constr:ghk-cluster} can be reduced to repeated applications of this basic move, making this the most important (and also simplest) case to understand. We~begin to study it in the following example.

\begin{Example}\label{ex:bside-ex1}
 Let $\oX = \CC\times \CC^\times$ and $H=\{(0,-1)\}$, as above. Then we can describe the blowup $X = \Bl_H\oX$ as embedded in $\CC\times\CC^\times\times \PP^1$ as
 \[
 X \cong \{(x,y),[z:w]\mid xz-(y+1)w =0\}\subset \CC\times\CC^\times\times\PP^1.
 \]
 The strict transform $D$ of the toric boundary divisor $\oD = \{0\}\times \CC^\times$ contains the point $[1:0]$ in the exceptional $\PP^1$, and hence the variety $U=X\setminus D$ is completely contained in the affine chart $\CC\times\CC^\times\times \CC_z$:
 \[
 U \cong \{(x,y,z)\mid xz = y+1\}\subset \CC\times \CC^\times\times \CC.
 \]
 If desired, we can also eliminate the variable $y$ to describe $U$ as the complement
 \[U=\CC^2\setminus \{xz=1\}.\]
\end{Example}

\begin{Definition}\label{def:toricmodel}
 Let $U = X\setminus D$ for a log Calabi--Yau pair $(X,D)$ obtained via Construction~\ref{constr:ghk-cluster}. In this case we call $U$ a {\em truncated cluster variety}, and we say that the blowup $(X,D)\to \big(\oX,\oD\big)$ is a {\em toric model} for $U$.
\end{Definition}

\begin{Remark} Note that in case the dimension of $X$ is equal to 2, the data of the choice of character $\chi_i\in \psi_i^\perp$ in Construction~\ref{constr:ghk-cluster} is not necessary; knowledge of the divisors in which the blow-up is to take place is sufficient data in this case.
\end{Remark}

Composition of the inclusion $U\to X$ and the blow-up map $X\to \oX$ defines a birational morphism $U\to \oX$. The inverse birational map is well-defined on the dense torus $\oT\subset \oX$, giving an open inclusion $\oT\hookrightarrow U$, defining the toric chart on $U$ corresponding to toric model $(X,D)\to\big(\oX,\oD\big)$. We will see that, as expected, other toric charts can be reached by mutation; first, we recall the traditional defining data for a cluster variety.

\begin{Definition}\label{def:seed}
 A {\em seed} $\frs$ is the data of
 \begin{itemize}\itemsep=0pt
 \item the lattice $N$;
 \item a skew bilinear form $\bsig\in \wedge^2 M$;
 \item a basis $\{\psi_1,\dots,\psi_r,\psi_{r+1},\dots,\psi_n\}$ of $N$, along with a designated subset $\{\psi_1,\dots,\psi_r\}$ of {\em unfrozen} basis vectors;
 \item positive integers $d_i$, which can be used to modify $\bsig$ to a non-skew-symmetric form $\epsilon$, defined by $\epsilon(\psi_i,\psi_j)=\bsig(\psi_i,\psi_j)d_j=:\epsilon_{ij}$. We say that $\epsilon$ is \emph{skew-symmetrizable}.
 \end{itemize}
\end{Definition}

One of the main insights of \cite{GHK2} is that the seed data of Definition~\ref{def:seed} can be expressed in terms of the log Calabi--Yau geometry discussed above.

\begin{Construction} \label{constr:ghk-cluster-seed}
 Let $\frs$ be a seed as in Definition~$\ref{def:seed}$.
\begin{itemize}\itemsep=0pt
 \item The unfrozen vectors $\psi_1,\dots, \psi_r$ determine a fan of rays $\Sigma$ in $N$, which can be upgraded to a~stacky fan by specifying $d_i\psi_i$ as generators of the rays in $\Sigma$. This information determines a~toric stack $\oX_\Sigma$ with dense torus $\oT_{\CC} = N_{\CC^\times}$ and boundary divisors $D_i = (N/\psi_i)_{\CC^\times}\allowbreak\times B\ZZ/d_i$.
 \item For $i\leq r$, The pairing $\chi_i := \bsig(\psi_i,-)$ determines an element $\chi_i\in M$, which by skew-symmetry of $\bsig$ is contained in $\psi_i^\perp$ and hence can be treated as a character of the $(n-1)$-torus $D_i$. We write $H_i = \{\chi_i + 1=0\}$.
 \item The skew bilinear form $\bsig\in \wedge^2M \simeq H^0\big(\Omega_{\oX}^2\big(\log\oD\big)\big)$ determines a holomorphic $2$-form on~$\oX$ with simple poles along $\oD$. As explained in {\rm \cite[Lemma 3.4]{HK-icm}}, the definition of $\chi_i$ ensures that $\bsig$ extends to a form in $H^0\big(\Omega_X^2(\log D)\big)$.
\end{itemize}
	Hence we get a log Calabi--Yau pair $(X,D)$ as in Construction~$\ref{constr:ghk-cluster}$. We will call the variety $U= X-D$ the {\em truncated cluster variety} associated to the seed $\frs$.
\end{Construction}

\begin{Remark}
 We refer to $U$ as a ``truncated cluster variety'' to distinguish it from the more traditional notions of cluster variety. Fock and Goncharov originally defined cluster varieties as unions of cluster charts; these cluster varieties come in two types, $\mathcal{X}$ and $\mathcal{A}$. Cluster $\mathcal{A}$-varieties are quasi-affine; from our point of view, the correct variety to consider is the affine closure of the cluster $\mathcal{A}$-variety. %This affine closure is what we will call the GHK cluster variety.

The variety $U$ constructed above is, up to codimension 2, a cluster $\mathcal{A}$-variety, and it has the same affine closure. It is for this reason that we call a ``truncated cluster variety''. Throughout this paper, we will only ever work with the variety~$U$ as constructed above, so we will often drop the adjective ``truncated''.

 The simplest case of this phenomenon is treated in the following example.
\end{Remark}

\begin{Example}\label{ex:nontruncated}
 Let $\frs$ be the seed data corresponding to the variety $U=\CC^2\setminus \{xz=1\}$ from Example~\ref{ex:bside-ex1}, and write $\widetilde{\frs}$ for the product $\frs\times \frs$ of two copies of this seed data. The cluster variety associated to $\widetilde{\frs}$ ought to be
 \[
 \widetilde{U}:=U\times U = \CC^4\setminus \{(x_1z_1-1)(x_2z_2-1)=0\},
 \]
 but the blow-up-and-delete construction described above produces instead the space
 \[
 \widetilde{U} \setminus \{(0,0,0,0)\}.
 \]
\end{Example}

Finally, there is also a geometric interpretation of cluster mutation. Recall how cluster mutation alters seed data:

\begin{Definition}\label{def:mutation}
 Starting from a seed $\frs$ as in Definition~\ref{def:seed} and a choice of unfrozen basis vector $\psi_k$, the {\em mutation at $\psi_i$} is a new seed $\mu_i\frs$, produced by changing the basis $\{\psi_i\}$ to the basis $\{\psi_i'\}$ determined by the formula
 \[
 \psi_i' = \begin{cases}
 \psi_i + [\epsilon_{ik}]_+\psi_k & \text{for}\ i\neq k,
 \\
 -\psi_k & \text{for}\ i=k,
 \end{cases}
 \]
 where we define $[r]_+ = \max(0,r)$.
\end{Definition}

The geometry underlying mutations is described in \cite[Section 3.1]{GHK2}, which we can summarize as follows:
\begin{Proposition}[{\cite{GHK2}}]\label{prop:bside-mut}
 Let $(X,D)\to \big(\oX,\oD\big)$ be a log Calabi--Yau variety with toric model induced from a seed $\frs$ as in Construction~$\ref{constr:ghk-cluster-seed}$. Given a choice of an unfrozen vector $\psi_i$ in the seed, an elementary transformation of $\PP^1$-bundles induces a new toric model $(X',D')\to \big(\oX',\oD'\big)$, which corresponds under Construction~$\ref{constr:ghk-cluster-seed}$ to the mutated seed $\mu_i\frs$. The induced map of tori $\oT_\CC\to\oT_\CC'$ is the corresponding cluster transformation.
\end{Proposition}

The local model for this elementary transformation of $\PP^1$-bundles is described in the following example.
\begin{Example}\label{ex:elem-mut} Let
\[\oX = \PP^1_{[z_0:z_1]}\times \CC^\times_w\supset\oD = \{z_0z_1=0\}= (\{0\}\cup\{\infty\})\times \CC^\times,\]
and let $H = \{z_0=0, w + 1=0\}$, so that $\bigl(\oX,\oD\bigr)$ is a toric model for the log Calabi--Yau pair $\bigl(X = \Bl_H\oX, D=\widetilde{\oD}\bigr)$ discussed in Example \ref{ex:bside-ex1}. Let $\widetilde{H}$ be the strict transform of the line $\{w+1=0\}\subset \oX$. Then the blowdown of $\widetilde{H}$ can be used to define a new toric model
\[
f\colon\ (X,D)\to\big(\oX',\oD'\big)\cong \PP^1\times \CC^\times,
\]
where now the new blowup locus $H'\subset \oD'$ lives in $\{\infty\}\times \CC^\times$ rather than $\{0\}\times \CC^\times$. An SYZ image of this mutatation is illustrated below in Figure~\ref{fig:bsyz-mut}.
\end{Example}

\subsection{SYZ bases}\label{sec:bsyz}
In many cases, the symplectic mirror to $U$ which we will construct has traditionally been des\-cribed (at least in dimension 2)
by means of the toric base diagrams from \cite{Sym}, which can be used to express Lagrangian torus fibrations with singularities. In fact, the resulting torus fibrations will be SYZ fibrations for our mirror construction. We will not use this fact in an essential way, but for completeness of the exposition we briefly indicate here the corresponding SYZ fibration on the truncated cluster varieties.

Our construction on both sides of mirror symmetry begins with the usual Lagrangian torus fibration on $(\CC^\times)^n$ and then asks how the fibration is affected by a modification of this space. In~the present case, this means understanding how the torus fibration on the dense torus $\oT_\CC=\oX\setminus \oD$ is affected by the GHK blow-up-and-delete construction. As this construction is local to each divisor, we only need to understand it in a single local model, treated in the following example.

\begin{Example}[{\cite[Example 4.4]{HK-icm}}]
Consider again the basic local model described in Example~\ref{ex:bside-ex1}, where
the log Calabi--Yau pair $(X,D)$ is obtained from toric model $\big(\oX,\oD\big) = (\CC\times \CC^\times, \{0\}\times \CC^\times)$ by blowing up at center $H=\{(0,-1)\}$.

From the perspective of the moment-map base $\RR_{\geq 0}\times \RR$ of $\oX$, the blowup at $H$ has the effect of deleting a triangle with base on the $y$-axis centered at $(0,0)$ and identifying two of its edges, as illustrated in Figure~\ref{fig:syz-b1}.
 The manifold $U = X\setminus D$ then admits a fibration over the interior of this base, with generic fiber a torus and nodal (focus-focus) singularity above the vertex of the triangle.
 Away from the singular fiber, the character lattices of the torus fibers equip the base with the structure of an integral affine manifold, with nontrivial monodromy around the singular fiber given by a Dehn twist in the vanishing cycle.
 After taking a branch cut (necessary because of the monodromy of the integral affine structure), we can embed this integral affine manifold in $\RR^2$.

 \begin{figure}[!ht]\vspace{2ex}
 \centering
 \includegraphics[scale=0.2]{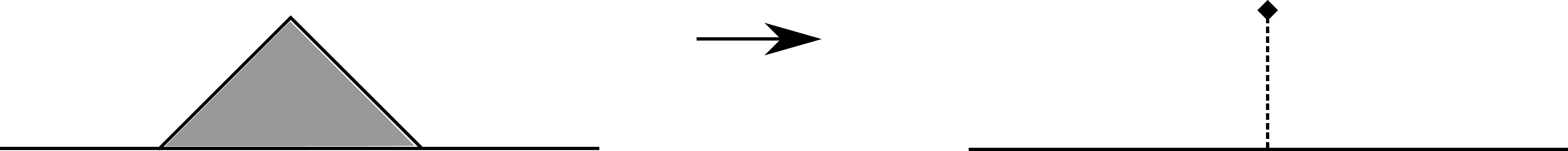}
 \caption{The process of producing the base for a torus fibration with singularity on the blowup $\Bl_{(0,1)}(\CC\times \CC^\times)$, by starting from the moment base of the original toric variety, then deleting a triangle and then identifying its edges. The dashed line indicates a branch cut in the integral affine structure.}
 \label{fig:syz-b1}
 \end{figure}
 \label{ex:syz-b-basic}
\end{Example}

This example makes clear the relation between the SYZ base of a cluster surface and the fan~$\Sigma$ in a toric model, since each blowup in Construction \ref{constr:ghk-cluster} has a local model as in Example~\ref{ex:syz-b-basic}, and these models only affect the SYZ base locally near the components of $H$, which do not interact.
We learned the following construction, and especially the diagram in Figure~\ref{fig:syz-b2}, from Andrew Hanlon:

\begin{Construction}
 Let $\Sigma$ be a fan of rays in $\RR^2$. The {\em SYZ base} $B$ associated to $\Sigma$ is the integral affine manifold which topologically is $\RR^2$, but with singularities in the integral affine structure as follows: for each ray $v=\left(\begin{smallmatrix}\psi_1\\\psi_2\end{smallmatrix}\right)$ in $\Sigma$, there is a singularity at a point $p_v$ on $v$, with monodromy matrix
 $\left(\begin{smallmatrix}1+\psi_1\psi_2&-\psi_1^2\\\psi_2^2&1-\psi_1\psi_2\end{smallmatrix}\right)$.
 After taking a branch cut at each $p_v$ pointing in direction~$v$, $B$ can be embedded in $\RR^2$ in the obvious way, as illustrated in Figure~$\ref{fig:syz-b2}$.
 \begin{figure}[!ht]\centering
	\includegraphics[scale=.135]{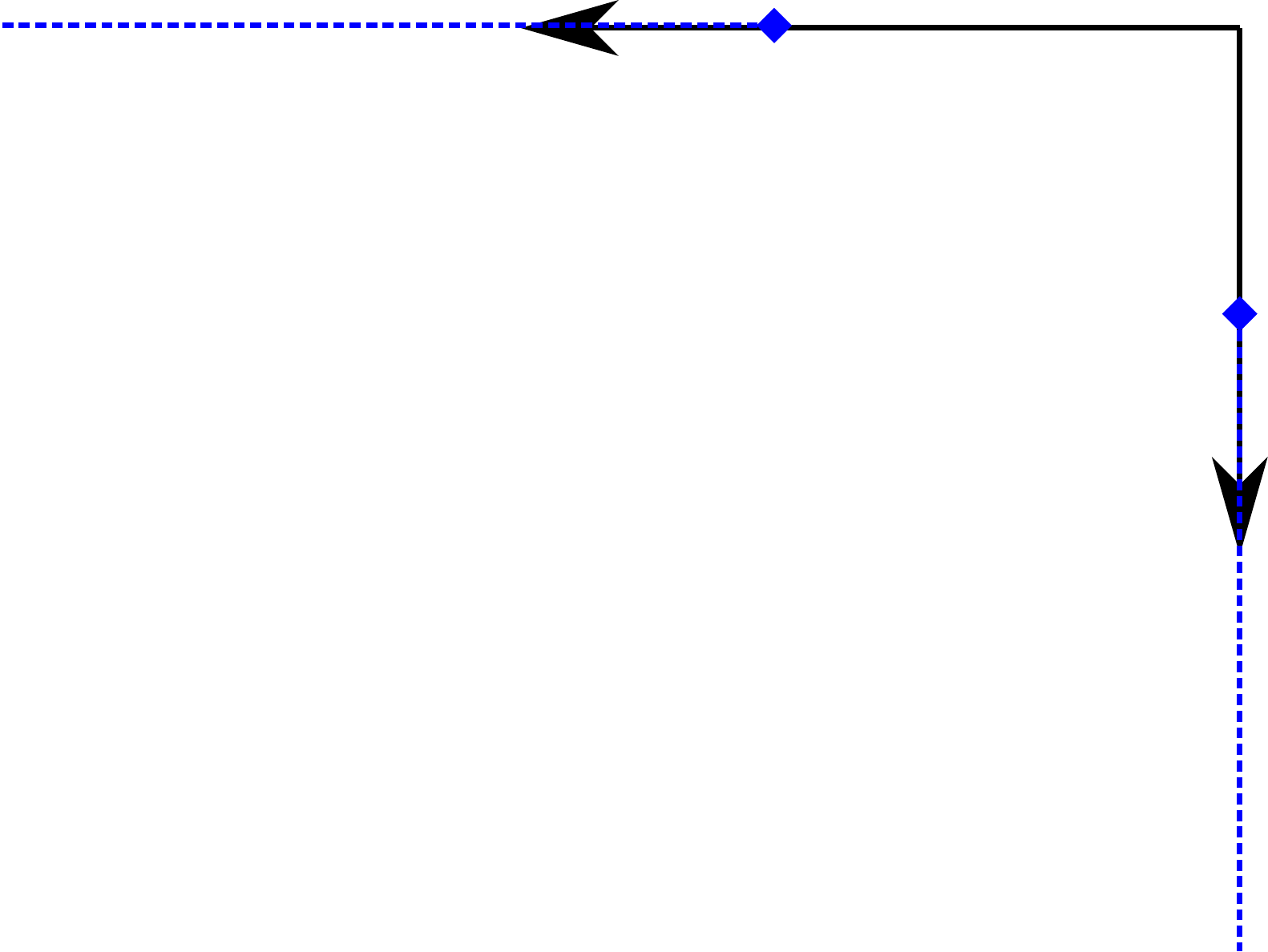}
 \caption{The fan $\Sigma$ for $\CC^2\setminus\{(0,0)\}$ superimposed on the SYZ base for the $A_2$ cluster variety, where the dashed lines emanating from singularities represent the branch cuts.}
 \label{fig:syz-b2}
 \end{figure}
 \label{constr:syz-b}
\end{Construction}

\begin{Proposition}Let $U$ be the cluster surface with toric model $\big(\oX_\Sigma,\oD\big)$. Then the integral affine structure on the base $B$ constructed in $\ref{constr:syz-b}$ is induced by a Lagrangian torus fibration $U\to B$ with focus-focus singularities above the singularities of $B$.
\end{Proposition}

\begin{proof}Let $\overline{\Delta}$ be the moment polytope of $\oX_\Sigma$, so that we can begin with the usual moment fibration $\oX_\Sigma\to \overline{\Delta}$. We would like to modify this fibration as we perform the GHK blow-up-and-delete construction. But from the above discussion, we see that such modifications can be performed locally on the base $\overline{\Delta}$, and that each such modification is given by the local model of
 Example \ref{ex:syz-b-basic}, modified by an integral affine-linear change of coordinates $A$ in $\SL(2,\ZZ)$. If
 $\left(\begin{smallmatrix}1\\0\end{smallmatrix}\right)$ is an eigenvector for the monodromy matrix of a singularity, and the transformation $A$ takes it to
 $\left(\begin{smallmatrix}\psi_1\\\psi_2\end{smallmatrix}\right)$,
 then $A$ must conjugate the monodromy matrix
 $\left(\begin{smallmatrix}1&1\\0&1\end{smallmatrix}\right)$ to
 $\left(\begin{smallmatrix}1+\psi_1\psi_2&-\psi_1^2\\\psi_2^2&1-\psi_1\psi_2\end{smallmatrix}\right)$.
\end{proof}

\begin{Remark}
 An analogue of the above proposition holds in higher dimensions, where now we replace the local model of Example \ref{ex:syz-b-basic} by its product with $(\CC^\times)^{n-2}$. As a result, the corresponding SYZ base will no longer have isolated singularities, but rather will have affine-linear subspaces of singularities (corresponding to the product of $\RR^{n-2}$ with the singularities from the 2-dimensional case). We do not go into detail about this case, since it is less useful for drawing pictures than the 2-dimensional case is.
 %but we revisit it on the mirror side in Section~\ref{sec:trades}. See in particular Figure~\ref{fig:trade-3d} for an illustration of the local model in dimension $n=3$.
\end{Remark}

One simple instance when the integral affine geometry described above can be helpful is in picturing the elementary transformation underlying the mutation construction of Proposition~\ref{prop:bside-mut}.
\begin{Example}
 Consider again the local model for an elementary transformation of $\PP^1$-bundles described in Example~\ref{ex:elem-mut}, where $X$ is obtained from $\oX$, $\oX'=\PP^1\times \CC^\times$ by non-toric blowup at a~single point in the toric boundary divisor $\{0\}\times\CC^\times$ (respectively $\{\infty\}\times \CC^\times)$.
 Two different SYZ bases for $X$ are depicted in Figure~\ref{fig:bsyz-mut}, depending on a choice of these two toric models. After deleting the divisor $D$ from $X$, the resulting SYZ bases for $U$ will be the same, differing only in a choice of branch cut used to embed them in $\RR^2$.
\begin{figure}[!ht]
\centering
 \includegraphics[scale=.165]{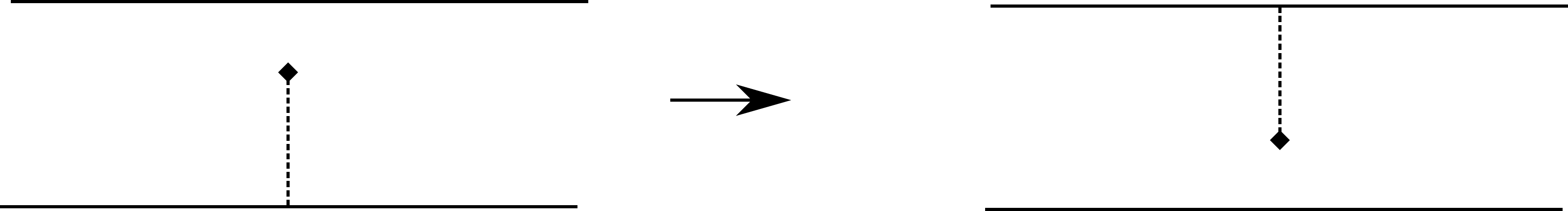}
 \caption{SYZ bases for $\Bl_{(0,-1)}\PP^1\times \CC^\times$ and $\Bl_{(\infty,-1)}\PP^1\times \CC^\times$, respectively.}
 \label{fig:bsyz-mut}
\end{figure}
\end{Example}

\section{Symplectic geometry background}\label{sec3}
\subsection{Liouville sectors}
We begin by recalling some basic facts about Weinstein manifolds and Liouville sectors from \cite{Eliashberg-revisited,GPS1}.

\begin{Definition}
 A {\em Liouville domain} is a compact symplectic manifold $(X,\omega)$ with boundary together with a choice of primitive $\omega = d\lambda$ such that $\lambda|_{\partial X}$ is a contact form, or equivalently such that the Liouville vector field $V = \omega^{-1}(\lambda)$ is outwardly transversal to the boundary. A Liouville domain $X$ can be completed to a {\em Liouville manifold} $\hat{X}$ by attaching a cylindrical end:
 \[
 \hat{X} = X\cup(\partial X\times [0,\infty)),
 \]
 where we extend $\lambda$ to $\hat{X}$ as ${\rm e}^s(\lambda|_{\partial X})$ at the end.
\end{Definition}

As Liouville domains and Liouville manifolds can be obtained from each other, we will not be careful about distinguishing them. Features of the boundary $\partial X$ (for instance, Weinstein hypersurfaces) can be equivalently described in terms of the ideal contact boundary of $\hat{X}$.

 \begin{Definition}
 The Liouville domain $(X,\omega,\lambda)$ is in addition {\em Weinstein} if it can be equipped with an exhausting Morse--Bott function $f\colon X\to \RR$ for which the Liouville vector field $V$ is gradient-like.
\end{Definition}
The Weinstein condition exists in order to rule out any pathological behavior of the Liouville vector field. All of the Liouville manifolds and sectors we consider in this paper will be Weinstein, and we will usually suppress the function $f$ in the notation. However, note that if $X$ is a Stein manifold with Morse--Bott K\"ahler potential $f$, then $f$ underlies a Weinstein structure with $\lambda = d^cf$.

Weinstein manifolds are so named in honor of the discovery, in \cite{Weinstein}, that they are a class of symplectic manifolds which can be glued together from simpler pieces. Today, we understand this gluing data, for a Weinstein manifold $X$, as specifying a cover of $X$ by {\em Liouville sectors.} These are Liouville manifolds with boundary which can be obtained from Liouville domains by completing along only a subset of the boundary.

\begin{Example}
	The manifold $T^*S^1=S^1\times \RR$ is a Liouville manifold obtained as a completion of the Liouville domain $S^1\times [-1,1]$. Specifying a finite number of points on the boundary of this Liouville domain determines a Liouville sector, by completing the boundary away from neighborhoods of those points.
\end{Example}

We will refer to \cite[Definition 1.1]{GPS1} for the technical definition of a Liouville sector, and we will content ourselves here with the simpler but ultimately equivalent notion from \cite{Eliashberg-revisited} of a~Weinstein pair.

\begin{Definition}
 A {\em Weinstein pair} is the data of a Weinstein domain $(X,\omega,\lambda)$ together with a~real hypersurface $\Sigma\subset \partial X$ such that $(\Sigma,d\lambda|_\Sigma,\lambda|_\Sigma)$ is itself a Weinstein manifold.
\end{Definition}

A Weinstein pair $(X,\Sigma)$ gives rise to a Liouville sector in the sense of \cite{GPS1} by completing~$X$ away from a standard neighborhood of $\Sigma$. Thus $\Sigma$ becomes the boundary of the resulting Liouville sector, along which we can perform gluings:

\begin{Construction}
 Let $(X_1,\Sigma_1)$ and $(X_2,\Sigma_2)$ be two Weinstein pairs, together with an isomorphism of Weinstein manifolds $\Sigma_1\cong \Sigma_2$. Then the Weinstein manifolds $X_1$ and $X_2$ can be glued together along a neighborhood of $\Sigma_i$ to produce a new Weinstein manifold $X_1\cup_\Sigma X_2$. If the boundary $\Sigma_i$ has several components, we can glue along one of them, and the resuling gluing will still be a Weinstein pair / Liouville sector.
\end{Construction}
\begin{Remark}
 Higher-codimension gluings are also possible, using the theory of Liouville sectors with (sectorial) corners from \cite{GPS2}, but will not be necessary in this paper.
\end{Remark}

All of the gluing data describing a Weinstein manifold $X$ can be encapsulated in a singular Lagrangian $\LL\subset X$, the skeleton of $X$.
\begin{Definition}
 For $X$ a Weinstein manifold, the {\em skeleton} of $X$ is the stable set
 \[
 \LL := \big\{x\in X\mid \varprojlim_{t\to \infty}\phi_t(x) \in X\big\}
 \]
 consisting of all points $x\in X$ whose image under the flow $\phi_t$ of the Liouville vector field eventually converges.

 For $X$ a Liouville sector, we allow $\LL$ to contain all those points which flow to the boundary of $X$.
	%[{\color{blue} When we talked it seemed like this definition wasn't quite right, or wasn't consistent with the definition of Liouville sector which you explained to me.}]
	In particular, for $(X,\Sigma)$ a Weinstein pair, the {\em skeleton} $\LL_{(X,\Sigma)}$ of the associated Liouville sector is the union
 \[\LL_{(X,\Sigma)} = \LL_X\cup \Cone(\Lambda),\]
 of the skeleton of $X$ with the cone, under the Liouville flow, of the skeleton $\Lambda$ of $\Sigma$.
\end{Definition}

Hence the skeleton of a Weinstein gluing $X_1\cup_\Sigma X_2$ is the gluing of the skeleta of the corresponding sectors along their glued boundary:
\[
\LL_{X_1\cup_\Sigma X_2} = \LL_{X_1}\cup_{\LL_{\Sigma}}\LL_{X_2}.
\]

The main source for Liouville sectors in mirror symmetry is via Landau--Ginzburg models.
\begin{Example}
 Let $X$ be a complex affine variety, and $\pi\colon X\to \CC$ a global holomorphic function. We can treat a general fiber $\pi^{-1}(1)$ as a Weinstein hypersurface inside the contact boun\-dary~$\partial^\infty X$ and hence $\big(X,\pi^{-1}(1)\big)$ as a Weinstein pair. We will often denote the associated Liouville sector by $(X,\pi)$.
\end{Example}
\subsection{Fukaya categories and descent}
In \cite{GPS1}, a wrapped Fukaya category $\cW(X)$ is associated to any Liouville sector $X$. In the case where $X$ is a Liouville manifold, this is the usual wrapped Fukaya category of $X$, and in the case where $X$ is a Landau--Ginzburg sector $(X,\pi)$, this is the Fukaya--Seidel-type category associated to the fibration $\pi$.
We refer to \cite{GPS2,GPS3,GPS1} for details of this construction, and we summarize here its most important properties.

\begin{Proposition}[\cite{GPS1}]
 The wrapped Fukaya category is covariantly functorial for inclusions of Liouville sectors: a Liouville subsector $i\colon X\subset Y$ induces a functor $i\colon \cW(X)\to \cW(Y)$.
\end{Proposition}

The central result of the series \cite{GPS2,GPS3,GPS1} is that not only do Liouville sectors glue along their skeleta at the boundary, but their Fukaya categories glue in precisely the same way:

\begin{Theorem}[\cite{GPS2}]\label{thm:codescent}
 The wrapped Fukaya category satisfies descent for Weinstein sectorial covers: Let $U_1,\dots,U_n\subset X$ be Liouville subsectors of $X$ which cover $X$ whose intersections $U_I:=\bigcap_{i\in I}U_i$, for $I\subset[n]$, are also Liouville sectors, and let $\calP$ be the poset of inclusions of the subsectors $U_I$. Then the natural map
 \[
 	\colim_{I\in \calP}\cW(U_I)\to \cW(X)
 \]
 from the homotopy colimit of the wrapped Fukaya categories of the $U_I$ to the wrapped Fukaya category of $X$ is an equivalence of dg categories.
\end{Theorem}

In other words, if we can present a Weinstein manifold $X$ as a gluing of various Liouville sectors $U_i$ whose Fukaya categories (and corestriction functors) we understand, then we can compute the Fukaya category of $X$ itself. One trick which is occasionally helpful in such computations is the following (described for instance at \cite[Lemma 1.3.3]{Gaits-DG}): if the functors in a colimit diagram are all continuous, and the categories are all cocomplete, then the colimit is equivalent to the limit of the opposite diagram obtained by passing from all functors to their right adjoints.

\begin{Corollary}\label{cor:descent}
 Let $U_i$ and $X$ be as in Theorem~$\ref{thm:codescent}$, and write $\cW^\infty$ for the ind-completion of the wrapped Fukaya category. Then the natural map
 \[
 \cW^\infty(X)\to\varprojlim_{I\in \calP^{\rm op}}\cW^\infty(U_I)
 \]
 is an equivalence. The wrapped Fukaya category $\cW(X)$ can therefore be recovered as the category of compact objects inside the homotopy limit $\varprojlim_{I\in \calP^{\rm op}}\cW^\infty(U_I)$.
\end{Corollary}
This is particularly useful in case each Fukaya category $\cW(U_I)$ is equivalent to the category $\Coh\big(U_I^\vee\big)$ of coherent sheaves on a {\em smooth} algebraic variety $U_I^\vee$, in which case the ind-completed category $\cW^\infty(U_I)\cong \Ind\Coh\big(U_I^\vee\big)$ will be equivalent to the category $\QCoh(U_I)$ of quasicoherent sheaves on~$U_I$.

The computations of the Fukaya categories $\cW(U_I)$ of individual subsectors can be accomplished in the case when the subsectors $U_I$ are cotangent bundles (with sectorial structure); in this case, the Fukaya category can be computed as a category of constructible sheaves on the base, or, more invariantly, microlocal sheaves along the Lagrangian skeleton:

\begin{Theorem}[{\cite[Theorem 1.1]{GPS3}}]%\label{thm:NZ}
 Let $(T^*M, \Sigma)$ be a Liouville sector, and $\Lambda\subset S^*M$ the skeleton of $\Sigma$ inside the contact boundary of $T^*M$. Write $\Sh^\infty_{-\Lambda}(M)$ for the cocomplete dg category of $($possibly infinite-dimensional$)$ sheaves on $M$ whose singular support away from the zero section is contained in $-\Lambda$. Then there is an equivalence of categories
 \[
 \Sh^\infty_{-\Lambda}(M) \cong \cW^\infty(T^*M,\Sigma)
 \]
 between this category of sheaves and the Fukaya category of the Liouville sector associated to $(T^*M,\Sigma)$.

 If we write $\wsh_{-\Lambda}(M)$ $($for ``wrapped constructible sheaves''$)$ for the category of compact objects in $\Sh^\infty_{-\Lambda}(M)$, then we obtain an equivalence
 \[
 \wsh_{-\Lambda}(M) \cong \cW(T^*M,\Sigma).
 \]
\end{Theorem}
\begin{Remark}
 Note that the category $\wsh_{-\Lambda}(M)$ is not in general equivalent to the usual category of constructible sheaves on $M$ with singular support in $-\Lambda$, since it includes some objects whose stalks are not finite-dimensional. For instance, if $M=S^1$ and $\Lambda$ is empty, then $\wsh_{-\Lambda}(M)$ includes the universal local system $k[x^\pm]$ on $S^1$.
\end{Remark}
\begin{Remark}
	The minus sign in the theorem, denoting the anti-symplectic involution $(-)$: $T^*M\to T^*M$ which acts on the fibers as scaling by $(-1)$, appears by convention in order to fix the fact that $\Sh_{\Lambda}^\infty(M)$ is actually isomorphic not to the Fukaya category $\cW^\infty(T^*M,\Sigma)$ but to its {\em opposite} category; as the map $(-)$ is anti-symplectic, it acts at the level of categories as an anti-involution, cancelling out this opposition.
\end{Remark}

We will not need to perform any serious constructible-sheaf calculations in this paper: we rely on the computation in \cite{Ku}, summarized in Theorem~\ref{thm:ccc} below. The difficulties for us will consist only in gluing these calculations together.

\begin{Remark}
 In case a Weinstein manifold $X$ has a cover by sectors of the form $(T^*M,\Sigma)$, the above theorems show that there exists a cosheaf of categories on the skeleton $\LL_X$ of $X$, locally isomorphic to $\wsh_{-\Lambda}(M)$, whose homotopy colimit recovers the wrapped Fukaya category of $X$. We denote this cosheaf by $\wmsh$, and call it the cosheaf of {\em wrapped microlocal sheaves} along $\LL_X$. In fact, this cosheaf has now beeen constructed in \cite{NS20} on the skeleton of a general Weinstein manifold.
\end{Remark}

For our discussion of mutations in Section \ref{sec:mut} below, we will find it helpful to note one other functoriality of the wrapped Fukaya category. In addition to its covariant functoriality under embeddings of Liouville sectors, the wrapped Fukaya category admits a contravariant functoriality under embeddings of Liouville {\em domains}, which is expected to agree with the {\em Viterbo functoriality} of \cite{AS-viterbo}.

\begin{Proposition}[{\cite[Section 8.3]{GPS2}}]\label{prop:viterbo}
 Let $Y$ be a Liouville sector, and $X\subset Y$ a Liouville subdomain. Then there is a restriction functor $\cW(Y)\to\cW(X)$.
\end{Proposition}

\subsection{Toric mirror symmetry}
We now specialize to the class of Liouville sectors most relevant to our discussion: those arising from Landau--Ginzburg mirrors to toric varieties.
\begin{Definition}
 Let $\oX_\Sigma$ be a toric variety with fan $\Sigma$ of cones in $N_\RR$. Let $\Delta\subset N_\RR$ be the convex hull of the generators of the rays in $\Sigma$, and let $W_\Sigma\colon M_{\CC^\times}\to \CC$ be a Laurent polynomial with Newton polytope $\Delta$. The Landau--Ginzburg model $(M_{\CC^\times},W)$ is a {\em Hori--Vafa mirror} to~$\oX_\Sigma$.
\end{Definition}

The skeleton of the Liouville sector $(M_{\CC^\times},W)$ (though not originally recognized as such) appeared implicitly in work of Bondal \cite{Bo-ccc} and was studied extensively by Fang--Liu--Treumann--Zaslow \cite{FLTZ2,FLTZ1,FLTZ3}.

\begin{Definition}\label{defn:bondal-lag}
 Let $\Sigma$ be a fan of cones in $N_\RR$, and note that we can treat $N_\RR$ as the cotangent fiber of the cotangent bundle $T^*M_{S^1} = M_\RR/M\times N_\RR$.
 The (negative) {\em Bondal Lagrangian} is a~conic Lagrangian $\LL_\Sigma\subset T^*M_{S^1}$ defined by
 \begin{equation}\label{eq:bondal-lag}
 \LL_\Sigma = \bigcup_{\sigma \in \Sigma}\overline{\sigma^\perp}\times \sigma\subset M_\RR/M\times N_\RR,
\end{equation}
where we write $\overline{\sigma^\perp}$ for the image of the linear subspace $\sigma^\perp\subset M_\RR$ in the quotient $M_\RR/M$.
\end{Definition}

\begin{Example}
 Let $\dim n=2$ and $\Sigma$ be the fan whose only nonzero cone is the ray $\RR\langle e_1\rangle$. Then the Lagrangian $\LL$ is a torus with a cylinder attached, as in Figure~\ref{fig:bondal-example}.
 The torus corresponds in the decomposition of equation~\eqref{eq:bondal-lag} to the zero cone in $\Sigma$, while the cylinder corresponds to the cone $\RR\langle e_1\rangle$.
 \begin{figure}[!ht]
 \centering
 \includegraphics[width=5cm]{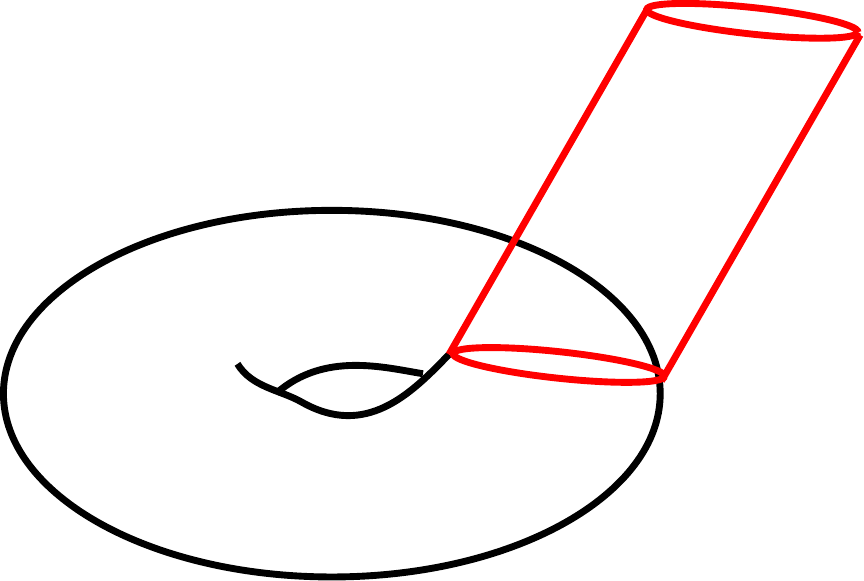}
 \caption{The skeleton $\LL_\Sigma$ for $\Sigma$ the fan of $\CC\times\CC^\times$, given as the union of a torus (in black) with a~cylinder (in red).}
 \label{fig:bondal-example}
 \end{figure}
 \end{Example}

As mentioned in Remark \ref{rem:stackiness}, in the case of a cluster seed whose inner-product $\overline{\sigma}$ is skew-symmetric but whose associated form $\epsilon$ is only skew-symmetrizable (i.e., when not all the integers~$d_i$ are equal to 1), we need to scale some of the generators of rays in $\Sigma$, making $\oX_\Sigma$ into a~toric stack. In this case, we need to make a slight modification of the above definition. For $\sigma$ a~cone in $\Sigma$, write $\sigma_\ZZ:=\sigma\cap N$ for the sublattice defined by $\sigma$, and note that the tori $\overline{\sigma}^\perp$ are the Pontrjagin duals of the quotient lattices $N/\sigma_\ZZ$:
\begin{equation}\label{eq:sigmaperp}
\overline{\sigma}^\perp = \Hom(N/\sigma_\ZZ, \RR/\ZZ).
\end{equation}

Suppose now that $\Sigma$ is a stacky fan, so that
 the rays in $\Sigma$ are equipped with not necessarily primitive generators. If $\sigma$ is a cone on rays equipped with not necessarily primitive generators $\beta_1,\dots,\beta_k$, write $\sigma_\ZZ$ for the sublattice $\langle \beta_1,\dots,\beta_k\rangle\subset N$.
 In analogy with equation~\eqref{eq:sigmaperp}, we~define a not necessarily connected torus $G_\sigma$ as the Pontrjagin dual
 \[
 G_\sigma = \Hom(N/\sigma_\ZZ,\RR/\ZZ).
 \]
 The group $\pi_0(G_\sigma)$ of connected components of $G_\sigma$ takes into account the isotropy along the toric stratum corresponding to cone $\sigma$. The definition of the Lagrangian in the stacky case proceeds precisely in analogy to Definition~\ref{defn:bondal-lag}:
\begin{Definition}
 Let $\Sigma$ be a stacky fan of cones $\sigma$, and define the groups $G_\sigma$ as above.
 Then the {\em stacky Bondal Lagrangian} is defined as
 \[
 \LL_\Sigma = \bigcup_{\sigma \in \Sigma}G_\sigma\times \sigma\subset M_\RR/M\times N_\RR.
 \]
\end{Definition}

The motivation for studying the Lagrangian $\LL_\Sigma$ was the following result, studied by Bondal and FLTZ and eventually proved in full generality by Kuwagaki (later reproved in many cases in \cite{Zhou-ccc}):
\begin{Theorem}[\cite{Ku}]\label{thm:ccc}
 There is an equivalence of dg categories $\Sh^w_{-\LL_\Sigma}(M_{S^1})\cong\Coh\big(\oX_\Sigma\big)$ between the category of wrapped constructible sheaves on the torus $M_{S^1}$ with singular support in $-\LL_\Sigma$ and the category of coherent sheaves on the toric stack~$\oX_\Sigma$.
\end{Theorem}

The theory of Liouville sectors has made it possible to relate the Lagrangian $\LL_\Sigma$ to the Hori--Vafa Landau--Ginzburg model and hence to reframe the above in more traditional mirror-symmetric language.
\begin{Theorem}[\cite{GS17,Zhou-skel}]\label{thm:hv-skel}
 Let $\Delta\subset N_\RR$ be a polytope containing 0, and let $\Sigma$ be the fan of cones on the faces of $\Delta$. As above, let $W$ be a Laurent polynomial $M_{\CC^\times}\to \CC$ with Newton polytope $\Delta$.
 Then $\LL_\Sigma$ is the Lagrangian skeleton of the Liouville sector associated to $(M_{\CC^\times},W)$.
\end{Theorem}

{\sloppy\begin{Corollary}
 The dg category $\Sh^w_{-\LL_\Sigma}(M_{S^1})$ is equivalent to the wrapped Fukaya category $\cW(M_{\CC^\times}, W)$ of the Liouville sector associated to the Hori--Vafa Landau--Ginzburg model $(M_{\CC^\times}, W)$.
\end{Corollary}}

However, Theorem~\ref{thm:hv-skel} as stated does not apply to our situation, since the hypothesis of the theorem requires that $\Sigma$ contain higher-dimensional cones, corresponding to the positive-dimensional faces of $\Delta$, whereas we are interested in fans which contain only one-dimensional cones.

Nevertheless, in the case of a purely 1-dimensional fan $\Sigma$, the Lagrangian $\LL_\Sigma$ is still the skeleton of a perfectly good Liouville sector -- indeed, its boundary at infinity is a disjoint union of $(n{-}1)$-tori, corresponding to the structure of a Weinstein pair of the form $\big((\CC^\times)^n,\bigsqcup_{i=1}^r(\CC^\times)^{n-1}\big)$ --~and Theorem~\ref{thm:ccc} remains true in this case, so that we must continue to understand this sector as the mirror to the (no longer complete) toric variety $\oX_\Sigma$. We are therefore justified in the following notation:
\begin{Definition}\label{defn:mirrorsec}
We write $\oX_\Sigma^\vee$ for the Liouville sector with Lagrangian skeleton $\LL_\Sigma$.
\end{Definition}

\section{The mirror construction}\label{sec4}
\subsection{Weinstein handle attachment}\label{sec:surg}
The manifolds in our mirror construction will be obtained from the cotangent bundle of a torus by a simple Weinstein handle attachment. We begin with the two-dimensional case, studied extensively in \cite{STW}.

\begin{Construction}\label{constr:2d-surgeries}
 Let $T=T^2$ be a $2$-torus and $\psi_1,\dots,\psi_r\colon T\to S^1$ characters of $T$, defining codimension-$1$ subtori $S_1,\dots,S_r$. A choice of coorientations of the subtori $S_i$ determines lifts of the $S_i$ to Legendrian circles $\calL_i$ in the contact boundary $S^*T$ of $T^*T$. We write $W$ for the Weinstein manifold obtained by attaching one Weinstein handle along each Legendrian $\calL_i$.
\end{Construction}

\begin{Example}\label{ex:basic-surgery}
 Let $r=1$ and let $\psi=\psi_1$ be the character given in coordinates by $(\theta_1,\theta_2)\mapsto \theta_1$. Then the Weinstein manifold $W$ has skeleton given by the union of the torus $T$ and a Lagrangian disk glued to $T$ along $S=\{\psi=1\}$, as depicted two different ways in Figure~\ref{fig:ex-basic}.
\end{Example}
\begin{figure}[!ht]
 \centering
 \includegraphics[width=8cm]{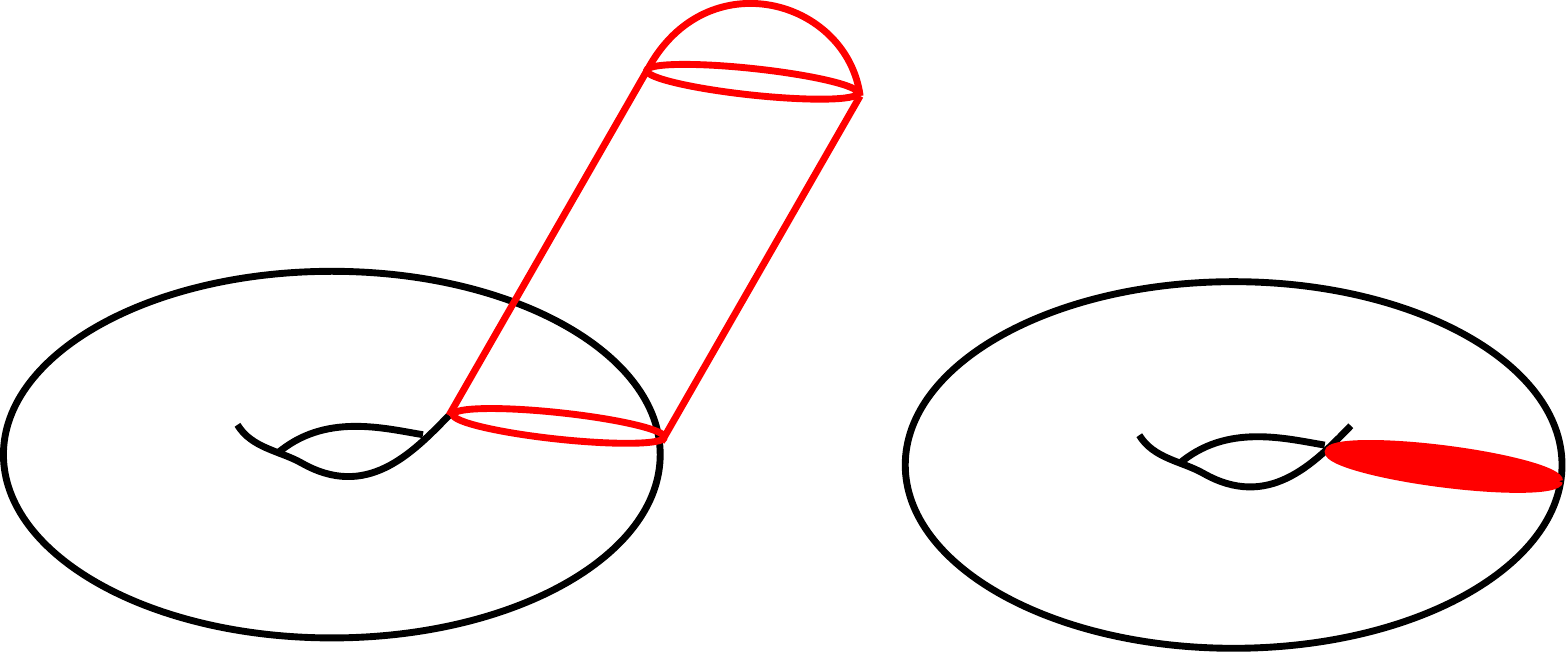}
 \caption{Two views of the skeleton obtained from a single Weinstein disk attachment, either as a torus with cylinder (the cone on Legendrian $\calL$) attached and then capped off, or as a torus with a disk glued in directly.}
 \label{fig:ex-basic}
\end{figure}
The Weinstein manifold $W$ constructed above admits a simple Liouville-sectorial cover.
Let $M=\ZZ^2$, $T=M_{S^1}$,
and note that each character $\psi_i$ is a vector in the dual vector space $N_\RR$; hence we can think of the collection of characters $\psi_i$ as a purely one-dimensional fan $\Sigma$ in $N_\RR$. Now the following is an immediate consequence of Definition \ref{defn:mirrorsec}:

\begin{Lemma}
 The Liouville sector determined by the pair $\big(T^*M, \bigsqcup \calL_i\big)$ is the Liouville sector~$\oX_\Sigma^\vee$ described in Definition~$\ref{defn:mirrorsec}$, the mirror to the $($incomplete$)$ toric variety $\oX_\Sigma$.
 %\label{lem:2dmirsec}
\end{Lemma}

The other piece in the Liouville-sectorial cover we consider will be a disjoint union of copies of $T^*D$, for $D$ a disk; this is the Weinstein manifold whose skeleton is the disjoint union of copies of the disk $D$. Note that for $A\subset D$ an annulus, the Liouville sector $T^*D$ admits $T^*A$ as a subsector.

\begin{Proposition}\label{prop:cover-2d}
 Let $W$ and $\Sigma$ as above. Then $W$ admits a cover by the Liouville sectors $\oX_\Sigma^\vee$ and $\bigsqcup_{i=1}^r T^*D$, which intersect in $\bigsqcup_{i=1}^r T^*A$.
\end{Proposition}

\begin{proof}
 This sectorial cover is nothing more than the expression of $W$ as a Weinstein disk attachment. See Figure~\ref{fig:ex-sectcover} for an illustration in the case $r=2$.
\end{proof}

\begin{figure}[!ht]
\centering
 \includegraphics[width=5cm]{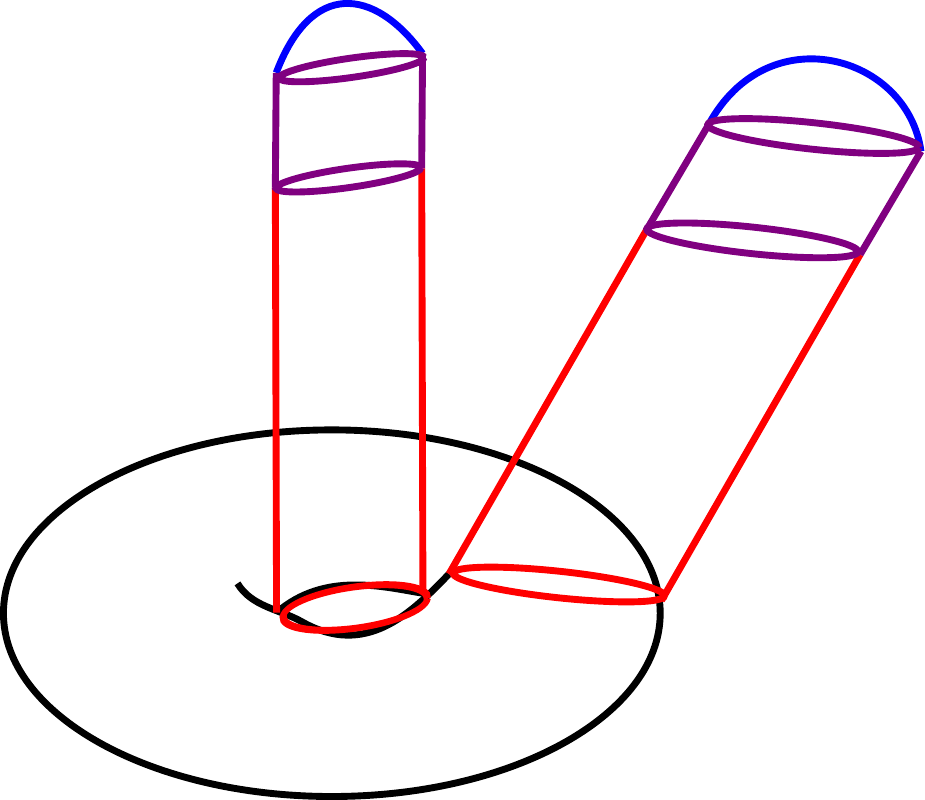}
 \caption{An image of the Liouville-sectorial cover of the Weinstein manifold obtained by gluing in disks along circles in homology classes $(1,0)$ and $(0,1)$. The three pieces in the cover are the torus with cylinders attached; a pair of disks; and their intersection, a pair of cylinders.}
 \label{fig:ex-sectcover}
\end{figure}
So far, we have only recalled the constructions already appearing in \cite{STW}. We now generalize to higher dimensions by taking a product of the local model for the above disk gluing with $T^*T^{n-2}=(\CC^\times)^{n-2}$.

\begin{Example}\label{ex:basic-surgery-highd}
 Let $W$ be the product of $(\CC^\times)^{n-2}$ with the Weinstein manifold studied in Example~\ref{ex:basic-surgery}. Then $W$ can be obtained as follows: starting with an $n$-torus $T=T^n$, define a~codimension-1 subtorus $S$ by the character $\psi\colon T\to S^1$ given by $(\theta_1,\dots,\theta_n)\mapsto \theta_1$, and use a~choice of coorientation to lift $S$ to a Legendrian $\calL\subset S^*T$, where now $\calL$ is an $(n-1)$-torus. Then glue $T^*T$ to $T^*\big(D\times T^{n-2}\big)$ along the Legendrian $\calL$.
\end{Example}

Note that unlike in the 2-dimensional case, our construction in higher dimensions will not be determined by the choice of Legendrian $\calL$: in addition, we need to know which of the remaining $n-1$ directions contains the disk capping off $\calL$. This will be a direction $\chi$ in the torus $S$.

\begin{Definition}
 Let $\chi\colon S^1\to T^{n-1}$ be a cocharacter of an $(n-1)$-torus $T^{n-1}$, so that $T^{n-1}$ can be expressed as a product $\big(S^1\big)_\chi\times T^{n-2}$. We write $\DD_\chi\times T^{n-2}$ for the result of replacing the first factor in this product with a disk.
\end{Definition}

We are thus ready to combine several copies of the move described in Example~\ref{ex:basic-surgery-highd}.

\begin{Construction}\label{cons:mirror}
 Let $T$ be the $n$-torus $M_{S^1}$, and let $\Sigma$ be a purely $1$-dimensional fan in~$N_\RR$, defining a collection of cooriented codimension-1 subtori $S_i\subset T$, lifting to Legendrians $\calL_i\subset S^*T$. Moreover, for each $S_i$ choose a cocharacter $\chi_i\colon S^1 \to S_i$ of the torus $S_i$. Choose a decomposition $S_i \simeq S^1_{\chi_i} \times T_i$, where $T_i$ is an $(n-2)$-torus. Finally, glue a copy of $T^*(D_{\chi_i}\times T_i)$ along each Legendrian $\calL_i$ as in Example~$\ref{ex:basic-surgery-highd}$. We denote the resulting Weinstein manifold by $U^\vee$.
\end{Construction}

Note that the resulting Weinstein manifold does not depend on the decomposition $S_i = S^1_{\chi_i} \times T_i$. The key point here is that given two decompositions $S_i \simeq S^1_{\chi_i} \times T_i \simeq S^1_{\chi_i} \times T_i'$, there is a canonical isomorphism $D_{\chi_i}\times T_i \simeq D_{\chi_i} \times T_i'$.

As in Proposition~\ref{prop:cover-2d}, this gluing presentation of the manifold $U^\vee$ immediately gives a~Liou\-ville-sectorial cover.

\begin{Proposition}\label{prop:cover}
 The Weinstein manifold $U^\vee$ defined in Construction~$\ref{cons:mirror}$ is a union of the Liouville sectors $\oX_\Sigma^\vee$ and $\bigsqcup_{i=1}^r T^*(D_{\chi_i}\times T_i)$, glued along $\bigsqcup_{i=1}^r T^*\big(S^1_{\chi_i}\times T_i\big)$.
\end{Proposition}

\begin{proof}
 As in Proposition~\ref{prop:cover-2d}, this follows immediately from the construction of $U^\vee$: one can obtain $U^\vee$ by beginning with sector $\oX_\Sigma^\vee$ and performing gluings along the $r$ disjoint Legendrians~$\calL_i$ as indicated.
\end{proof}
Combined with the codescent prescription of Theorem~\ref{thm:codescent}, this gives a presentation of the wrapped Fukaya category of $U^\vee$.

\begin{Corollary}\label{cor:a-pushout}
 The wrapped Fukaya category $\cW\big(U^\vee\big)$ of the Weinstein manifold defined by Construction~$\ref{cons:mirror}$ can be presented as a homotopy pushout of the diagram
 \begin{equation}\label{eq:a-pushout}
 \xymatrix{
 \cW\big(\bigsqcup_{i=1}^r T^*\big(S^1_{\chi_i}\times T_i\big)\big)\ar[r]\ar[d]&\cW\big(\bigsqcup_{i=1}^r T^*D_{\chi_i}\times T_i\big)\\
 \cW\big(\oX_\Sigma^\vee\big),
 }
\end{equation}
where the maps are the covariant functorialities of the wrapped Fukaya category under inclusions of Liouville sectors.
\end{Corollary}

Since the categories and maps in the pushout diagram \eqref{eq:a-pushout} all have well-understood interpretations in mirror symmetry, we can rephrase Corollary~\ref{cor:a-pushout} entirely in terms of the mirror algebraic geometry used by Gross--Hacking--Keel in Construction \ref{constr:ghk-cluster} .

Recall that construction also began with the construction of a fan $\Sigma\subset N_\RR$ of rays $\langle d_i\psi_i\rangle$, defining a toric variety $\oX_\Sigma$ with boundary divisor
\[
\oD=\bigsqcup_{i=1}^r \oD_i\cong \bigsqcup_{i=1}^r(N/\psi_i)_{\CC^\times}\times B\ZZ/d_i,
\]
along with characters $\chi_i$ of these quotient tori,
defining a subvariety
\[
H\cong \bigsqcup_{i=1}^r (\CC^\times)^{n-2}\times B\ZZ/d_i\subset \oD.
\]
We write $i\colon\oD\to \oX_\Sigma$ for the inclusion of the toric boundary divisor and $j\colon H\to \oD$ for the inclusion of $H$ into $\oD$. Now we are ready to restate Corollary~\ref{cor:a-pushout}:
\begin{Corollary}\label{cor:b-pushout}
 The wrapped Fukaya category $\cW\big(U^\vee\big)$ of $U^\vee$ can be presented as a homotopy pushout of the diagram
 \begin{equation}
 \xymatrix{
 \Coh\big(\oD\big)\ar^-{j^*}[r]\ar_-{i_*}[d]&\Coh(H)\\
 \Coh\big(\oX_\Sigma\big).}
 \label{eq:b-pushout}
 \end{equation}
\end{Corollary}
\begin{proof}
 The equivalences between the categories in diagram \eqref{eq:b-pushout} and those in diagram \eqref{eq:a-pushout} have already been discussed. \big(For the stacky case, where not all $d_i$ are equal to~1, note that there is an equivalence of dg categories $\Coh(B\ZZ/d_i)\cong \Coh\bigl( \bigsqcup_{j=1}^{d_i}\Spec(k) \bigr)$.\big) That the vertical arrows agree is proved in \cite[Section~7.2]{GS17} (technically the result there is for the pullback $i^*$; the result for the pushforward can be recovered by passing to right adjoints), and the agreement of the horizontal arrows is clear.
\end{proof}
The above presentation is very pleasant to have, though it may at first seem perplexing and indeed to an algebraic geometer it will likely not look ``geometric'', due to the opposing functorialities present. Nevertheless, we will see in our main result, Theorem \ref{thm:main} below, that this pushout does indeed present the category of coherent sheaves on the truncated cluster variety $U=X\setminus D$.

Before proceeding to that proof, we consider the most basic example.

\begin{Example}\label{ex:fund-calc}
 Consider again the situation of Example~\ref{ex:basic-surgery}, where $n=2$, and $r=1$, so that~$\chi$ is the character of $T=\RR^2/\ZZ^2$ given by $(\theta_1,\theta_2)\mapsto \theta_1$. In this case, the diagram \eqref{eq:b-pushout} becomes
 \[
 \xymatrix{
 \Coh(\{0\}\times \CC^\times)\ar[r]\ar[d]&\Coh\left(\{(0,-1)\}\right)\\
 \Coh(\CC\times \CC^\times),}\]
 where the vertical and horizontal maps are the evident pushforward and pullback functors, respectively.

 Let us rewrite this diagram by first passing to ind-completions and right adjoints as in Corollary \ref{cor:descent}. At the level of categories, this amounts to replacing coherent by quasicoherent sheaves (using the fact that all varieties involved are smooth -- otherwise, we would have to use ind-coherent sheaves), so that we obtain the pullback diagram
 \begin{equation}\label{eq:ex-pullback}
 \begin{split}&
 \xymatrix{
	k[x,y^\pm]/(x)\mmod&\ar[l] k[x,y^\pm]/(x,y+1)\mmod\\
	k[x,y^\pm]\mmod\ar[u],
	}\end{split}\end{equation}
	of module categories. The horizontal functor, right adjoint to pullback, is pushforward, (which on module categories is restriction of scalars); the vertical functor, as the right adjoint to pushforward, is the shriek pullback functor, which on a $k[x,y^\pm]$-module $M$ is the coextension of scalars
	\[
		M\mapsto \Hom_{k[x,y^\pm]}\big(k[x,y^\pm]/(x),M\big).
	\]
	By resolving $k[x,y^\pm]/(x)$ as a $k[x,y^\pm]$-module, we can also understand this as the functor taking a module $M$ to the cone of the map $x\colon M\to M$.

 Let us denote by $\cC$ the homotopy limit of diagram \eqref{eq:ex-pullback}.
 An object of $\cC$ is a triple $(M,N,\phi)$ consisting of a $k[x,y^\pm]$-module $M$, a $k[x,y^\pm]/(x,y+1)$-module $N$, and an isomorphism
 \[
\phi\colon\ \Cone\big(M\stackrel{x}\longrightarrow M\big)\stackrel{\sim} \longrightarrow N
 \]
	of $k[x,y^\pm]/(x)$-modules (regarding $N$ as one by restriction). The isomorphism $\phi$ can be reformulated as the imposition on $\Cone(x\colon M\to M)$ of the equation that $y+1$ acts as zero, or in other words, it is the data of a degree-$(-1)$ endomorphism $x'$ of this complex with $d(x') = y+1$.

	As explained in detail in \cite[Proposition 4.10]{STWZ}, if $M$ was concentrated in degree 0, then $x'$ is equivalent to the datum of a (degree-0) endomorphism of $M$ satisfying $xx'=y+1$: pictorially, we have
	\[
		\xymatrix{
			M\ar[r]^-x&M\ar[ld]_-{x'}\\
			M\ar[r]^-x&M,
			}
		\]
		where the horizontal map $x$ is the differential on $\Cone(x)$, so that $xx'=y+1$ expresses the triviality of $y+1$ acting on $\Cone(x)$.

	As expected, this agrees with the category $\QCoh(U) = k[x,x',y^\pm]/(xx'=y+1)\mmod$ of quasicoherent sheaves
	on the mirror cluster variety $U=\CC^2\setminus \{xx'=1\}$.
\end{Example}

 \subsection{Homological mirror symmetry}
Let $(X,D)$ be an $n$-dimensional log Calabi--Yau pair with toric model $\big(\oX,\oD\big)$, modeling the log Calabi--Yau variety $U=X\setminus D$ as in Definition~\ref{def:toricmodel}.
Recall that such a pair is determined by the following two pieces of data:
\begin{itemize}\itemsep=0pt
 \item A purely one-dimensional fan $\Sigma$ in $N_\RR$, the union of rays $\langle d_1\psi_1\rangle, \dots, \langle d_r\psi_r \rangle$, where $\psi_i\in N$ are primitive lattice vectors, and $d_i\in \ZZ_{>0}$. As is usual in toric geometry, the $\psi_i$ can be understood as cocharacters of the torus $N_{\CC^\times}$.
 \item For each $1\leq i\leq r$, an element $\chi_i\in \psi_i^\perp= M/\psi_i$, which we think of as a character on the~divisor $D_i$ (thought of as a complex $(n-1)$-torus), defining a hypersurface $H_i=\allowbreak\{\chi_i=-1\}\subset D_i$.
\end{itemize}
This is the same data used in Construction \ref{cons:mirror} to define the Weinstein manifold $W^\vee$, where now we think of $\psi_i\in N$ not as a cocharacter of $N_{\CC^\times}$, but rather as a character of the real $n$-torus~$M_{S^1}$, and $\chi_i$ as a cocharacter on the subtorus $S_i \subset M_{S^1}$ defined by $\psi_i$.

The main theorem of this paper is a statement of homological mirror symmetry for the varieties $U$ and $U^\vee$.

\begin{Theorem}\label{thm:main}
 There is an equivalence of dg categories
 \[
 	\Coh(U)\cong \cW\big(U^\vee\big)
 \]
 between the category of coherent sheaves on $U$ and the wrapped Fukaya category of $U^\vee$.
\end{Theorem}

The slogan is, ``The blow-up-and-delete construction of Gross--Hacking--Keel is mirror to the Weinstein handle attachment of Example~\ref{ex:basic-surgery-highd}''.
We will deduce this theorem from the following more general result, which we can understand as a generalization of the calculation described in Example~\ref{ex:fund-calc}:

\begin{Theorem}\label{thm:main-general}
	Let $X$ be a smooth variety or DM stack, $D\xhookrightarrow{k} X$ a divisor, and $H\xhookrightarrow{i}D$ a~smooth subvariety of codimension $2$ in $X$. Let $\frX = \Bl_HX\setminus \widetilde{D}$ and let $\cC$ denote the
homotopy pushout of dg categories
\begin{equation}\label{eq:pushout}
\colim\bigl(\Coh(H)\stackrel{i^*}\longleftarrow\Coh(D)\stackrel{k_*}\longrightarrow\Coh(X)\bigr).
\end{equation}
Then there is an equivalence
\[
\cC\cong\Coh(\frX)
\] between $\cC$ and the category of coherent sheaves on $\frX$.
\end{Theorem}
\begin{proof}
We begin by writing down the functor $\cC\to \Coh(\frX)$. From the pushout presentation of $\cC$, such a functor may be specified by the data of functors $F_H\colon \Coh(H)\to \Coh(\frX)$ and $F_X\colon \Coh(X)\to\Coh(\frX)$, together with an equivalence between the pullbacks of $F_H$ and $F_X$ to $\Coh(D)$ along the maps in the pushout \eqref{eq:pushout}.

Consider the following diagram of all the spaces involved in our story, where $E$ is the exceptional divisor of the blowup, the vertical maps are the projections, and the horizontal maps are the inclusions:
\[
\xymatrix{
\frX\ar@{^{(}->}[r]^-j & \Bl_HX\ar[d]_p & &E\ar@{_{(}->}[ll]_-{\ovi}\ar[d]^q\\
& X & D\ar@{_{(}->}[l]^k & H.\ar@{_{(}->}[l]^-i
}
\]
The functors $F_H$ and $F_X$ will be defined as the compositions
\begin{gather*}
F_H = j^*\ovi_* q^*, \qquad F_X = j^*p^*.
\end{gather*}
The restriction to $D$ of $F_X$ is evidently given by $j^*p^*k_*$, and we must produce an isomorphism between this functor and the other restriction $j^*\ovi_*q^*i^*$. Note that before applying final composition with $j^*$, these two functors differ, since $p^*k_*$ takes sheaves on $D$ to their pullback to the blowup, whereas $\ovi_* q^*i^*$ takes sheaves on $D$ to their component lying along the exceptional divisor $E$. In other words, the difference between these two functors is a sheaf on the proper transform $\widetilde{D}$ of $D$. The final pullback $j^*$ will kill this component, forcing the two functors to agree.

Thus, we have a well-defined functor $F\colon\cC\to \Coh(\frX)$.
To understand this functor,
we will need to apply
Orlov's result \cite{Orl92} that $p^*\Coh(X)$ and $q^*\Coh(H)\otimes\cO_E(1)$ generate the category $\Coh(\Bl_H(X))$ of coherent sheaves on the blowup, and in fact they form a semi-orthogonal decomposion:
\begin{equation}\label{eq:semiorth}
\Coh(\Bl_H(X)) = \langle \Coh(H)\otimes \cO_E(-1), \Coh(X)\rangle.
\end{equation}
Note that after pulling back to $\frX$, we can ignore the tensor
product with $\cO_E(-1)$ in \eqref{eq:semiorth}, since deleting $\widetilde{D}$ makes $E$ affine. This makes clear why $F$ is essentially surjective: the image of the functor $F$ contains the pullbacks to $\frX$ of both components in the semi-orthogonal decomposition~\eqref{eq:semiorth} and hence contains all objects in $j^*\Coh(\Bl_H(X))$.

To see why the functor $F$ is in fact an equivalence of categories, recall that the
category $\Coh(\frX)$ is a localization of $\Coh(\Bl_H(X))$ obtained by killing the identity morphism of any object in $\Coh\big(\widetilde{D}\big)$, and observe that this is equivalent to the localization which identifies any object in the second term of~\eqref{eq:semiorth} of the form~$k_*\cF$, for some~$\cF$ in $\Coh(D)$, with the object $(iq)^*\cF\otimes \cO_E(-1)$ in the first term.

Now note that the
category obtained from the semi-orthogonal decomposition \eqref{eq:semiorth} by identifying the images of $\Coh(X)$ and $\Coh(H)$ is equivalent to the category obtained from the ortho\-go\-nal sum $\Coh(H)\oplus \Coh(X)$ by the same identification;
but $\Coh(H)\oplus \Coh(X)$ is the~co\-pro\-duct of the categories $\Coh(H)$ and $\Coh(X)$, and the identification just described is the coequalizer of the respective maps from $D$ to these categories. In other words, this presentation exhibits $\Coh(\frX)$ as the pushout \eqref{eq:pushout}, and $F$ as the canonical equivalence $\cC\cong \Coh(\frX)$.
\end{proof}

\begin{proof}[Proof of Theorem \ref{thm:main}]
	Recall from Corollary \ref{cor:b-pushout} that there is a commutative diagram
	\begin{equation}\label{eq:equivalence-of-diagrams}
\begin{split}&
\xymatrix{
	\cW\big(\oX_\Sigma^\vee\big)\ar[d]_{\rotatebox{90}{$\sim$}}&
	\cW\bigl(\bigsqcup_{i=1}^r T^*\big(S^1_{\chi_i}\times T_i\big)\bigr)\ar[r]\ar[l]\ar[d]_{\rotatebox{90}{$\sim$}}&
	\cW\bigl(\bigsqcup_{i=1}^r T^*D_{\chi_i}\times T_i\bigr)\ar[d]_{\rotatebox{90}{$\sim$}}\\
	\Coh\big(\oX\big)&\Coh\big(\oD\big)\ar[l]_{i_*}\ar[r]^{j^*}&\Coh(H)
}\end{split}
\end{equation}
whose vertical maps are equivalences,
where in the bottom row, $\oX\xhookleftarrow{i}\oD$ is the toric model for the log Calabi--Yau pair $(X,D)$, and $H\xhookrightarrow{j}\oD$ is the divisor defined by the characters specified by the seed data; and the spaces in the top row are the Liouville sectors identified in Proposition~\ref{prop:cover}.

Corollary \ref{cor:a-pushout} computes that the colimit of the top row of \eqref{eq:equivalence-of-diagrams} is the wrapped Fukaya cate\-gory~$\cW\big(U^\vee\big)$.
We now observe that the bottom row of \eqref{eq:equivalence-of-diagrams} satisfies the hypotheses of Theo\-rem~\ref{thm:main-general}, so that we may conclude from that theorem that the colimit of the bottom row is equivalent to the cate\-gory~$\Coh(\frX)$ of coherent sheaves on the space
\[
	\frX:=\Bl_H\oX\setminus\widetilde{\oD}
\]
obtained from the variety $\oX$ by blowing up along $H$ and then deleting the proper transform of the toric boundary divisor $\oD$. But by definition $\frX=U$ is the truncated cluster variety we are interested in.

The vertical equivalence between rows in the diagram \eqref{eq:equivalence-of-diagrams} induces an equivalence between the colimits of those rows; by the above discussion, this is the equivalence of categories
\[
	\Coh(U)\cong\cW\big(U^\vee\big)
\]
we wanted to establish.
\end{proof}

One consequence of the equivalence from Theorem \ref{thm:main} is that the category $\Coh\big(\oT_\CC\big)$ of coherent sheaves on the torus chart $\oT_\CC$ of the cluster variety $U$ corresponds under mirror symmetry to the category $\Loc\big(T^\vee\big)$ to the category of local systems on the dual torus $T^\vee$, which is a closed subset in the skeleton $\LL_{U^\vee}$ of~$U^\vee$.
A Weinstein neighborhood $T^*T^\vee$ of this torus is therefore a~Liouville subdomain of the mirror cluster variety $U^\vee$, and the Viterbo restriction of Proposition~\ref{prop:viterbo} gives a functor
\begin{equation}\label{eq:viterbo1}
\cW(U^\vee)\to \cW\big(T^*T^\vee\big).
\end{equation}
This functor is mirror to the pullback $\Coh(U)\to \Coh\big(\oT_\CC\big)$ of coherent sheaves on the cluster variety $U$ to a toric chart.

From this description, one can immediately guess what the symplectic mirror to the elementary transformations underlying cluster mutation should look like. This should be a procedure which exchanges the Weinstein presentation of $\LL$, as obtained from $T^*T^\vee$ by Weinstein disk attachments, to a different Weinstein presentation which begins with a different torus $T^*(T')^\vee$. We will see below that this guess is essentially correct.

\subsection{Mutations}\label{sec:mut}
We now discuss the effect of mutations in symplectic geometry. Local models for these mutations have been studied in \cite{Nad-mut,PT}, and in the 2-dimensional case a global description of these mutations is given in \cite{HK20,STW}.

As explained in \cite{STW}, from the perspective of a 2-dimensional Lagrangian skeleton $\LL$, cluster mutation corresponds to the Lagrangian disk surgery studied in \cite{Yau}.
\begin{Construction}
The local model for this surgery, depicted in Figure~$\ref{fig:disk-surg}$,
begins with a Lagrangian skeleton $\LL$ which can be presented as the union of a surface $\Sigma$ together with a disk $\DD$ attached to $\Sigma$ along a circle. There is a homotopy of Weinstein structures which collapses the disk $\DD$ and then expands it to produce a new Lagrangian skeleton $\LL'$ which is a union of $\DD$ with a surface $\Sigma'$.
Alternatively, if one folds up the bottom half of the cylinder to make a plane with the central disk, this transformation can be pictured via a cooriented circle in the plane shrinking to a point and then reexpanding with its opposite coorientation, as in Figure $\ref{fig:surg-gks}$.

\begin{figure}[!ht]
 \centering
 \includegraphics[width=6cm]{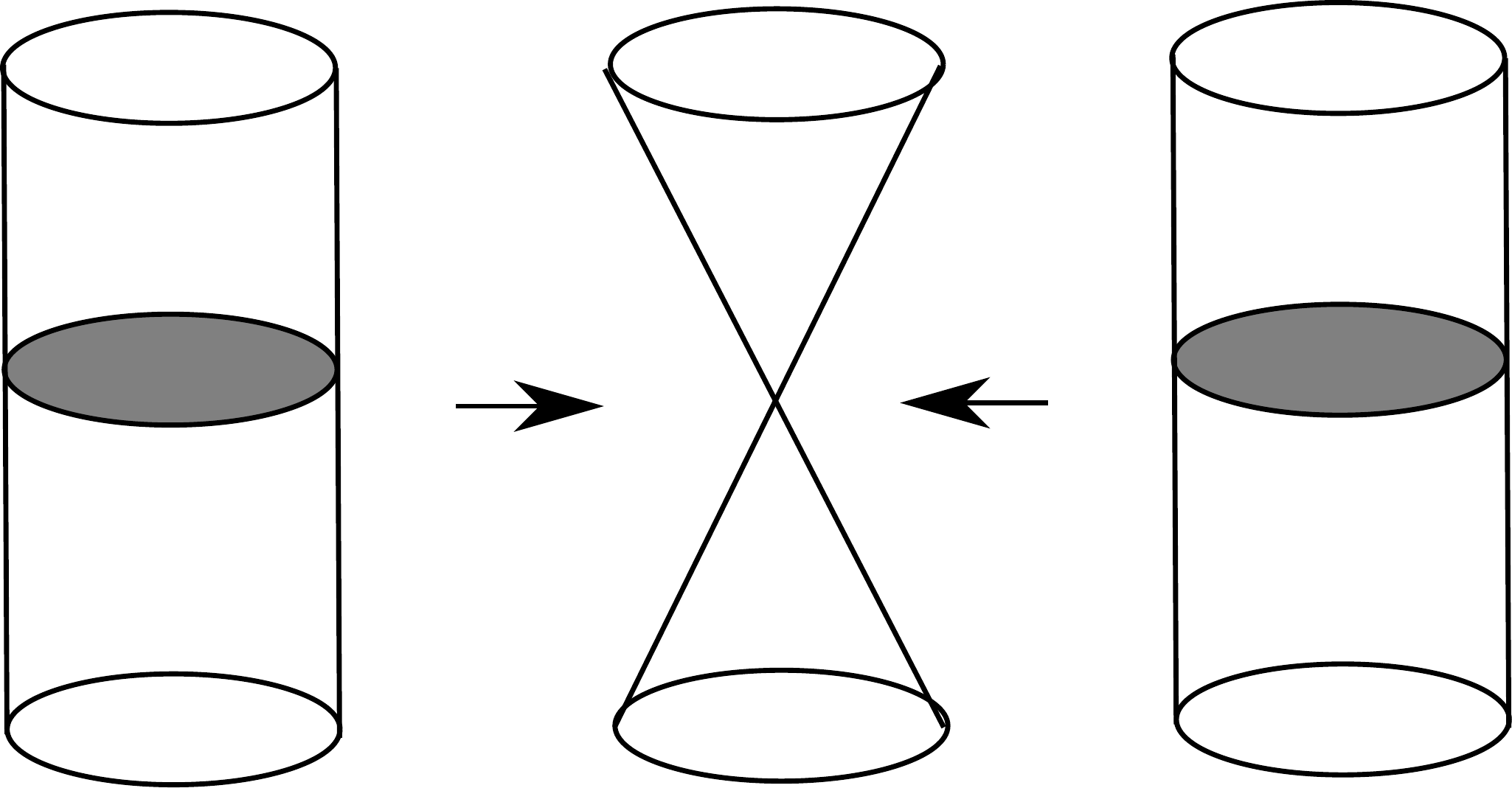}
 \caption{The local model for the Lagrangian disk surgery mirror to cluster mutation: a disk attached to a cylinder $\Sigma$ is collapsed and then expanded to produce a new cylinder $\Sigma'$.}
 \label{fig:disk-surg}
\end{figure}

\begin{figure}[!ht]
 \begin{center}
 \includegraphics[width=10cm]{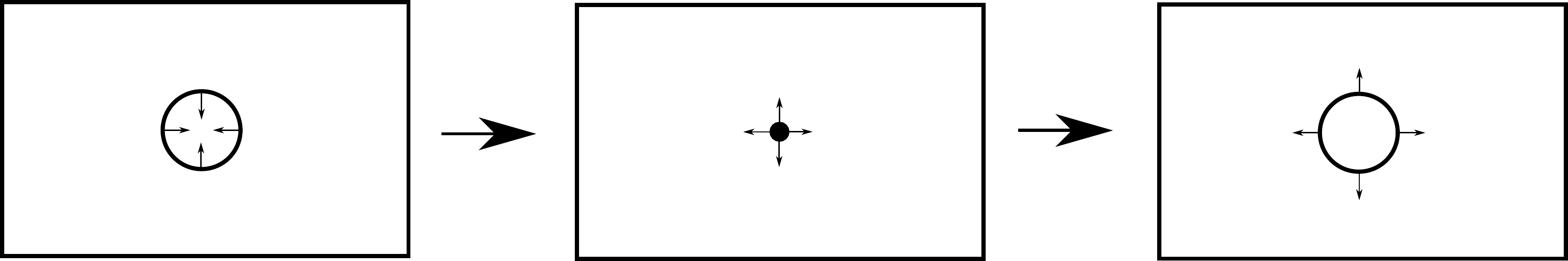}
 \end{center}
 \caption{The Lagrangian from Figure \ref{fig:disk-surg} is obtained by taking the cone on the Legendrian lift of the cooriented circle--or, in the central diagram, the whole conormal circle of the point. The coorientation of the circle changes as it shrinks to a point and regrows, although the Legendrian circles at infinity are all related by a contact isotopy.}
 \label{fig:surg-gks}
\end{figure}
\label{cons:basic-surgery}
\end{Construction}

Suppose that the surface $\Sigma$ also has other disks $\DD_1,\dots,\DD_r$ glued in, along Legendrians lifting cooriented circles $S_1,\dots,S_r\subset \Sigma$ which may intersect the boundary circle of $\DD$. If the above surgery implements mutation, then we should expect that it should modify the boundary circles~$S_i$ by a Dehn twist for each positively oriented intersection with the boundary circle of $\DD$. The results of \cite{STW} establish that this is the case. In our language, we can restate their result as follows:

\begin{Theorem}[{\cite[Theorem 5.7]{STW}}]\label{thm:stw-mutations}
Let $\LL$ be the Lagrangian skeleton of a Weinstein mani\-fold~$U^\vee$ obtained as in Construction $\ref{constr:2d-surgeries}$ by beginning with the cotangent bundle $T^*T$ of a~$2$-torus~$T\cong T^2$ and attaching Weinstein handles along Legendrian lifts of cooriented circles $S_1,\dots,S_k\subset T$.

Applying the disk surgery described in Construction $\ref{cons:basic-surgery}$ to the $i$th disk $\DD_i$ results in a new Lagrangian skeleton $\LL'$ of the same Weinstein manifold, presented by beginning with the cotangent bundle $T^*T'$ of a $2$-torus $T'\cong T^2$ by handle attachments along Legendrian lifts of cooriented circles $S_1,\dots,S_k\subset T'$, where $S_i'=S_i$ with opposite coorientation, and for $j\neq i$ we have
\[
	S_j' = \begin{cases} \tau_iS_j&\text{if}\ \langle S_j, S_i\rangle >0,\\
	 S_j&\text{otherwise},\end{cases}
\]
where we write $\langle -,-\rangle$ for the oriented intersection number and $\tau_i$ for the Dehn twist about $S_i$.
\end{Theorem}

The paper \cite{STW} also checks that the respective tori $T$, $T'$ contained in the skeleton before and after a disk surgery are related by cluster mutation. Continuing with the notation of Theorem~\ref{thm:stw-mutations}, Viterbo restriction \eqref{eq:viterbo1} gives a pair of functors
\begin{equation}
 \Coh\big(T_\CC^\vee\big)\cong\cW(T^*T)\gets \cW\big(U^\vee\big)\to\cW(T^*T')\cong \Coh\big( (T')^\vee_\CC\big),
 \label{eq:viterbo2}
\end{equation}
which we would like to understand as mirror to a birational map $T_\CC^\vee\dashrightarrow(T')^\vee_\CC$ between the complexified dual tori. This map is studied in \cite{STW} by determining what it does to skyscraper sheaves in $\Coh\big(T_\CC^\vee\big)$, which correspond to finite-rank local systems on~$T$. If $\cE$ is a local system on $T$, we can imagine $\cE$ as a vector space $\cE$ together with an endomorphism $\cE_\gamma$ for each path $\gamma$ in~$T$.

Now note that on finite-rank local systems (i.e., objects of the {\em compact} Fukaya category of~$T^*T$), the right adjoints in~\eqref{eq:viterbo2} are defined, so we can obtain a diagram
\begin{equation}
 \Tors\big(T^\vee_\CC\big)\cong\Loc^{\rm fin}(T)\to \cW\big(U^\vee\big)\gets\Loc^{\rm fin}(T')\cong \Tors\big( (T')^\vee_\CC\big),
\label{eq:viterbo-adj}
\end{equation}
and we can ask when a local system on $T$ maps into the image of the right-hand functor -- i.e., to an object of $\cW\big(U^\vee\big)$ represented by a local system on $T'$.

\begin{Theorem}[{\cite[Theorem 4.16]{STW}}]\label{thm:stw-birational}
 Let $\cE$ be a finite-rank local system on $T$. Then $\cE$ is equivalent in $\cW\big(U^\vee\big)$ to a local system $\cE'$ on $T'$ if and only if the monodromy of $\cE$ around $S_i$ does not have 1 as an eigenvalue.

 Suppose this is the case. If $\gamma\subset T^2$ is a path not intersecting $S_i'$, then $\cE'_\gamma=\cE_\gamma$, and if~$\gamma$ is a path which crosses $S_i'$ against its coorientation, in a neighborhood of the crossing point the holonomy is modified to $\cE'_\gamma = \on{Id}-\cE_{S_i}$, where $S_i$ is treated as an oriented path, where its coorientation points rightward. $($The identification of paths in $T$ with paths in $T'$ is given by a~Dehn twist about $S_i.)$
\end{Theorem}

These results generalize easily to a higher-dimensional Weinstein manifold $U^\vee$ obtained as in Construction \ref{cons:mirror} by Weinstein handle attachments along Legendrian lifts of codimension-1 subtori $S_1,\dots,S_r$ in an $n$-torus $T$.

\begin{Construction}\label{cons:basic-surgery-highd}
 Let $T$, $S_i$ as above. Near the subtorus $S_i$, the torus $T$ splits as $T^2\times T^{n-2}$, where the first $T^2$ is spanned by $S{\chi_i}$ $($where $\chi_i$ is the cocharacter of $S_i$ determining the handle attachment as in Construction~$\ref{cons:mirror})$ and a circle not contained in $S_i$.
We can thus take the product with $T^{n-2}$ of the disk surgery of Construction $\ref{cons:basic-surgery}$ above to obtain an analogous operation on $T=T^n$. The result is a new Lagrangian which is the union of an n-torus $T'$ with Weinstein handles attached to the Legendrian lifts of codimension-$1$ subtori $S_i'$.
\end{Construction}

 Theorems \ref{thm:stw-mutations} and~\ref{thm:stw-birational} can be applied directly to this setting to describe the new torus $T'$ and its subtori $S_i'$ just as in the 2-dimensional case. In the language of cluster seeds, we can summarize this discussion as follows:
\begin{Theorem}
 Let $T$, $S_1,\dots,S_r$, as in Construction $\ref{cons:mirror}$, corresponding to a seed $\frs$. Then~$T'$, $S_1',\dots,S_r'$ as in Construction $\ref{cons:basic-surgery-highd}$ correspond to the $i$th mutation $\mu_i\frs$ of $\frs$. Moreover, the birational map defined by~\eqref{eq:viterbo-adj} agrees with the cluster transformation for this mutation.
\end{Theorem}

\subsection*{Acknowledgements}The authors are grateful to Roger Casals and Dmitry Tonkonog, for explanations about nodal trades; Andrew Hanlon, for discussions about the GHK construction; Vivek Shende and Harold Williams, for discussions about the papers~\cite{STW,STWZ}; Harold again, for several helpful comments on a draft of this paper; and
Denis Auroux and Daniel Pomerleano, for comments on a now-deleted section of this paper.
%a correction to Section~\ref{sec:trades} and for the content of Remark~\ref{rem:crit-filt}.
BG is supported by an NSF postdoctoral fellowship, DMS-2001897.
\textit{Added in revised version: the authors would also like to thank Peng Zhou for pointing out an error in the discussion of K\"ahler potentials in a previous version of this paper.}

\pdfbookmark[1]{References}{ref}
\LastPageEnding

\end{document}